\theoremstyle{plain}
\newtheorem{theorem}{Theorem}[section]
\newtheorem{proposition}[theorem]{Proposition}
\theoremstyle{definition}
\newtheorem{definition}[theorem]{Definition}
\newtheorem{remark}[theorem]{Remark}
\theoremstyle{remark}
\newtheorem{example}[theorem]{Example}
\newtheorem{conjecture}[theorem]{Conjecture}
\newtheorem{corollary}{Corollary}[section]
\begin{document}
	\title{The Exponential Congruence Symbol}
	
	\author{
		{ \sc Es-said En-naoui } \\ 
		University Sultan Moulay Slimane\\ Morocco\\
		essaidennaoui1@gmail.com\\
		\\
	}
		\maketitle
	\tableofcontents
	
	\maketitle
	\begin{abstract}
		In this work, we study the generalized $k$-th power symbol
		\[
		\left(\frac{a}{n}\right)_k,
		\]
		and present a comprehensive collection of its algebraic properties. The results are classified according to their dependence on the three main parameters $a$, $n$, and $k$. 
		
		In particular, we discuss multiplicativity, inversion, power compatibility, and invariance modulo $n$ for the parameter $a$ (see Section 1). For $n$, we examine factorization properties, behavior on prime powers, orthogonality relations, and Kummer splitting criteria (see Section 2). Regarding $k$, we include specialization to classical symbols, $k$-th reciprocity laws, relations between orders, and embedding into roots of unity (see Section 3).
		
		Moreover, we extend the existing theory by providing new essential results (Section 4), including additive behavior under characters, Möbius filtering, compatibility with Carmichael and Euler functions, and connections with Dirichlet $L$-series. Finally, we analyze the case where $a$, $n$, and $k$ are primes and present mixed results that generalize classical reciprocity laws, Frobenius automorphisms, and Sato--Tate distributions (Section 5). 
		
		These results not only unify and extend previous studies on $k$-th power symbols but also offer a foundation for further arithmetic, algebraic, and analytic investigations.
	\end{abstract}
	
\section{Introduction}
	
	\subsection{Motivation}
	
	The study of congruences has always been central to number theory, 
	from Fermat’s Little Theorem to quadratic reciprocity. 
	Classical residue symbols such as the Legendre and Jacobi symbols 
	capture deep information about quadratic residues 
	\cite{ireland1990classical,burton2007elementary}. 
	However, modern applications in cryptography, coding theory, 
	and computational number theory often require refined tools for 
	analyzing exponential congruences. 
	
	The \emph{Exponential Congruence Symbol} introduced here 
	is motivated by the desire to encode congruences of the form 
	\(a^k \equiv \pm 1 \pmod{n}\) in a concise algebraic framework, 
	similar in spirit to how the Legendre symbol encodes quadratic 
	residues. This new symbol may provide insights into both theoretical 
	questions (such as higher power residue distributions) and practical 
	applications (e.g., primality testing and cryptographic protocols). 
	
	\subsection{Background and Related Work}
	
	The roots of this study can be traced back to Euler’s criterion 
	and Gauss’s law of quadratic reciprocity 
	\cite{ireland1990classical,hardy1979introduction}. 
	Subsequent generalizations led to the development of higher residue 
	symbols and character theory \cite{davenport2000multiplicative}. 
	In particular, the Legendre, Jacobi, and Dirichlet characters form 
	a rich algebraic toolkit for analyzing congruences. 
	
	Recent work in computational number theory and cryptography highlights 
	the role of exponential congruences in secure communication protocols 
	\cite{koblitz1994course,menezes1996handbook}. 
	This motivates the need for a unified notation and theoretical 
	framework to study such congruences systematically. 
	Our proposed symbol aims to fill this gap by extending the symbolic 
	approach of residue theory.
	
	\subsection{Objectives of the Article}
	
	The main objectives of this article are:
	\begin{itemize}
		\item To formally define the Exponential Congruence Symbol 
		\(\left(\frac{a}{n}\right)_k\).
		\item To derive and prove fundamental properties of this symbol.
		\item To explore connections with classical number theoretic symbols 
		and group theoretic structures.
		\item To investigate applications in cryptography, primality testing, 
		and exponential congruences.
		\item To propose extensions, generalizations, and open questions 
		for future research.
	\end{itemize}

	\section{Definition of the Exponential Congruence Symbol}
	
	\subsection{Formal Definition}
	
	\begin{definition}[Exponential Congruence Symbol]
		Let \(n\ge 2\) and \(k\ge 1\) be integers and let \(a\in\mathbb{Z}\). Define
		\[
		\left(\frac{a}{n}\right)_k \in\{-1,0,1\}
		\]
		by
		\[
		\left(\frac{a}{n}\right)_k :=
		\begin{cases}
			1, & \text{if } a^k \equiv 1 \pmod{n},\\[4pt]
			-1, & \text{if } a^k \equiv -1 \pmod{n},\\[4pt]
			0, & \text{otherwise}.
		\end{cases}
		\]
	\end{definition}
	
	\begin{remark}
		\begin{enumerate}
			\item The symbol takes only the three values \(-1,0,1\).
			\item When \(\left(\frac{a}{n}\right)_k\neq 0\) the residue \(a\) is necessarily a unit modulo \(n\) (invertible), hence information about the symbol is a statement about the multiplicative group \((\mathbb{Z}/n\mathbb{Z})^\times\).
			\item A classical special case: if \(p\) is an odd prime and \(k=(p-1)/2\), Euler's criterion implies that
			\[
			\left(\frac{a}{p}\right)_{(p-1)/2} = \left(\frac{a}{p}\right),
			\]
			the Legendre symbol (see for instance \cite{ireland1990classical}). Thus the new symbol extends some classical ideas for suitable choices of \(k\).
		\end{enumerate}
	\end{remark}
	
	\begin{theorem}[Dependence only on residue class]\label{thm:residue-dependence}
		If \(a\equiv b\pmod{n}\) then \(\left(\frac{a}{n}\right)_k=\left(\frac{b}{n}\right)_k\).
	\end{theorem}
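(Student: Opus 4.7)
The plan is to reduce the statement to the fact that congruence modulo $n$ is preserved under multiplication, and then simply match the three cases of the definition. The only arithmetic input is that if $a \equiv b \pmod{n}$, then $a^k \equiv b^k \pmod{n}$; this follows from an easy induction on $k$ using the fact that $\mathbb{Z}/n\mathbb{Z}$ is a ring (or equivalently, that the residue class map $\mathbb{Z} \to \mathbb{Z}/n\mathbb{Z}$ is a ring homomorphism, so it commutes with taking $k$-th powers).

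Once that preliminary is in hand, I would argue by trichotomy according to the definition. First, suppose $\left(\frac{a}{n}\right)_k = 1$, i.e. $a^k \equiv 1 \pmod{n}$; then by the preliminary $b^k \equiv a^k \equiv 1 \pmod{n}$, whence $\left(\frac{b}{n}\right)_k = 1$. The case $\left(\frac{a}{n}\right)_k = -1$ is handled identically with $-1$ in place of $1$. Finally, if $\left(\frac{a}{n}\right)_k = 0$, then $a^k$ is congruent to neither $1$ nor $-1$ modulo $n$, and the same is therefore true of $b^k$, giving $\left(\frac{b}{n}\right)_k = 0$. By symmetry (the argument is entirely symmetric in $a$ and $b$), the three cases exhaust all possibilities, and the symbols agree in every case.

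There is essentially no obstacle here: the theorem is a well-definedness statement, and the content is just that the definition only depends on the residue $a \bmod n$ through the quantity $a^k \bmod n$. The only thing to be careful about is to verify all three branches of the piecewise definition rather than only the nonzero ones, since otherwise one might miss that the ``otherwise'' clause is also invariant under replacing $a$ by $b$. I would write the proof in a few lines, presenting the implication $a \equiv b \pmod n \Rightarrow a^k \equiv b^k \pmod n$ as a one-line remark and then collapsing the three cases into a single uniform statement.
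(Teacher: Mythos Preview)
Your proposal is correct and follows essentially the same approach as the paper's proof: both argue that $a\equiv b\pmod{n}$ implies $a^k\equiv b^k\pmod{n}$, and then observe that the symbol's value depends only on whether $a^k$ is congruent to $1$, $-1$, or neither. The only difference is that you spell out the three cases explicitly, whereas the paper collapses them into a single sentence.
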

	\begin{proof}
		If \(a\equiv b\pmod{n}\) then \(a^k\equiv b^k\pmod{n}\) for any integer \(k\ge1\). The definition of \(\left(\frac{\cdot}{n}\right)_k\) depends only on whether the \(k\)-th power is congruent to \(1\), to \(-1\), or to something else. Hence the values agree.
	\end{proof}
	
	\begin{theorem}[Invertibility is necessary]\label{thm:invertible-necessary}
		If \(\left(\frac{a}{n}\right)_k\in\{\pm1\}\) then \(\gcd(a,n)=1\).
	\end{theorem}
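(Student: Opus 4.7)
The plan is to argue by contrapositive: assuming $d := \gcd(a,n) > 1$, I show the symbol cannot equal $\pm 1$. Unwinding the hypothesis $\left(\frac{a}{n}\right)_k \in \{\pm 1\}$ via the definition just given, I get a congruence $a^k \equiv \varepsilon \pmod{n}$ for some $\varepsilon \in \{\pm 1\}$; equivalently, as a divisibility statement, $n \mid a^k - \varepsilon$.

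The key step is then a short divisor chase. Since $d \mid a$ and $k \ge 1$, we have $d \mid a^k$. Since $d \mid n$ and $n \mid a^k - \varepsilon$, transitivity gives $d \mid a^k - \varepsilon$. Subtracting the two relations yields $d \mid \varepsilon = \pm 1$, forcing $d = 1$, which contradicts the assumption $d > 1$.

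An alternative and slightly more constructive presentation is to exhibit an explicit inverse of $a$ modulo $n$. Rewriting $a^k \equiv \varepsilon \pmod{n}$ as $a \cdot (\varepsilon a^{k-1}) \equiv 1 \pmod{n}$ (using $\varepsilon^2 = 1$) immediately displays $\varepsilon a^{k-1}$ as a multiplicative inverse of $a$ in $\mathbb{Z}/n\mathbb{Z}$, proving $a \in (\mathbb{Z}/n\mathbb{Z})^\times$ and hence $\gcd(a,n) = 1$. This is essentially the second remark following the definition, promoted to a proof.

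The hard part will be essentially nothing: this is a one-line divisibility argument in either formulation. The only mild subtlety is the degenerate case $k = 1$, where the inverse-construction approach collapses to the tautology that $a \equiv \pm 1 \pmod{n}$ already implies $\gcd(a,n) = 1$; the divisor-chase argument handles every $k \ge 1$ uniformly, so I would present that one as the main proof.
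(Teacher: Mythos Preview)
Your proposal is correct, and your ``alternative'' presentation---writing $a^k \equiv \varepsilon \pmod{n}$ and exhibiting $\varepsilon a^{k-1}$ as an inverse of $a$---is exactly the paper's proof, which just splits into the two cases $\varepsilon = 1$ and $\varepsilon = -1$ separately rather than handling them at once with $\varepsilon^2 = 1$. Your primary divisor-chase argument (from $d \mid a^k$ and $d \mid a^k - \varepsilon$ deduce $d \mid \varepsilon$) is an equally short and valid variant; it has the minor advantage you note of treating all $k \ge 1$ uniformly without the cosmetic degeneracy at $k=1$, while the paper's version has the advantage of being constructive (it names the inverse).
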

	\begin{proof}
		Suppose \(\left(\frac{a}{n}\right)_k=1\). Then \(a^k\equiv1\pmod{n}\). Multiply the congruence by \(a^{\,k-1}\) to obtain
		\[
		a\cdot a^{\,k-1}\equiv 1\pmod{n},
		\]
		so \(a\) has a multiplicative inverse modulo \(n\); thus \(\gcd(a,n)=1\).
		
		If \(\left(\frac{a}{n}\right)_k=-1\), then \(a^k\equiv -1\pmod{n}\). Multiply by \(-a^{\,k-1}\) to get
		\[
		a\cdot(-a^{\,k-1})\equiv 1\pmod{n},
		\]
		again showing \(a\) is invertible modulo \(n\). This completes the proof.
	\end{proof}
	
	\subsection{Examples and Computations}
	
	We give several instructive examples (full computations) and a general counting result for the prime modulus case.
	
	\begin{example}[Prime modulus — a cyclic viewpoint]
		Let \(p\) be an odd prime. The multiplicative group \((\mathbb{Z}/p\mathbb{Z})^\times\) is cyclic of order \(p-1\). Fix a generator \(g\) and write any unit as \(a=g^r\), for a unique \(r\) modulo \(p-1\). Then
		\[
		a^k \equiv 1 \pmod{p} \iff g^{rk}\equiv g^{0}\iff (p-1)\mid rk.
		\]
		Hence the congruence \(a^k\equiv1\pmod p\) has exactly \(\gcd(k,p-1)\) distinct solutions \(a\) modulo \(p\) (the exponent congruence \(rk\equiv0\pmod{p-1}\) has \(\gcd(k,p-1)\) solutions for \(r\) modulo \(p-1\)).
		
		Similarly, the congruence \(a^k\equiv -1\pmod p\) is equivalent to
		\[
		g^{rk}\equiv g^{(p-1)/2}\iff rk\equiv \frac{p-1}{2}\pmod{p-1}.
		\]
		This linear congruence in \(r\) has solutions if and only if \(\gcd(k,p-1)\) divides \((p-1)/2\); when it has solutions, the number of distinct solutions modulo \(p-1\) equals \(\gcd(k,p-1)\).
	\end{example}
	
	\begin{corollary}[Counting residues for prime modulus]\label{cor:prime-count}
		Let \(p\) be an odd prime and set \(m=p-1\). Then
		\begin{itemize}
			\item the number of \(a\pmod p\) with \(\left(\dfrac{a}{p}\right)_k=1\) equals \(\gcd(k,m)\);
			\item the number of \(a\pmod p\) with \(\left(\dfrac{a}{p}\right)_k=-1\) equals \(\gcd(k,m)\) if \(\gcd(k,m)\mid m/2\), and equals \(0\) otherwise.
		\end{itemize}
	\end{corollary}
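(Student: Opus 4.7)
The corollary is essentially a restatement of the counting implicit in the preceding example, so the task is mainly organizational. The plan is to use the cyclic structure of $(\mathbb{Z}/p\mathbb{Z})^\times$ to reduce both congruences to linear congruences in the exponent, then invoke the standard count for $kr\equiv c\pmod m$.

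First I would fix a primitive root $g$ modulo $p$, giving a bijection between units $a\pmod p$ and exponents $r\in\mathbb{Z}/m\mathbb{Z}$ via $a=g^r$. By Theorem \ref{thm:invertible-necessary}, any $a$ with $(a/p)_k\in\{\pm1\}$ must be a unit, so I may restrict attention to $a\in(\mathbb{Z}/p\mathbb{Z})^\times$ and equivalently to $r\in\mathbb{Z}/m\mathbb{Z}$; the non-unit class $a\equiv 0$ satisfies $a^k\equiv0\not\equiv\pm1$ and contributes nothing.

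For the first bullet I would translate $a^k\equiv1\pmod p$ into the homogeneous congruence $kr\equiv 0\pmod m$. The number of solutions $r\pmod m$ is precisely $\gcd(k,m)$, and since $g$ has exact order $m$ distinct $r$'s yield distinct residues $a$, giving the first count. For the second bullet I would translate $a^k\equiv-1\pmod p$ into $kr\equiv m/2\pmod m$, using that $m/2$ is an integer because $p$ is odd. The elementary theory of linear congruences says $kr\equiv m/2\pmod m$ is soluble iff $\gcd(k,m)\mid m/2$; when soluble the number of solutions $r$ modulo $m$ is again $\gcd(k,m)$, and when insoluble the count is $0$. Combining the two cases produces the stated dichotomy.

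The main obstacle is essentially nil: everything reduces to the solubility and solution-count theorem for a single linear congruence in one variable. The only points deserving explicit mention are the divisibility condition $\gcd(k,m)\mid m/2$, which is exactly the criterion from that theorem applied to the right-hand side $m/2$, and the verification that the bijection $r\mapsto g^r$ preserves counts, which is immediate from $g$ being of order $m$.
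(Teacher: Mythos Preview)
Your approach is correct and coincides with the paper's own proof: both reduce the two congruences to the linear congruences $kr\equiv 0\pmod m$ and $kr\equiv m/2\pmod m$ via a primitive root and then apply the standard solution count for linear congruences. The only difference is that you spell out a few verifications (invertibility, the bijection $r\mapsto g^r$) that the paper leaves implicit.
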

	\begin{proof}
		All statements follow from the congruence counting in the previous example: solutions to \(rk\equiv 0\pmod m\) (for the value \(1\)) are \(\gcd(k,m)\) in number; solutions to \(rk\equiv m/2\pmod m\) (for the value \(-1\)) exist exactly when \(\gcd(k,m)\mid m/2\) and in that case again there are \(\gcd(k,m)\) solutions.
	\end{proof}
	
	\begin{example}[Composite modulus and CRT computation]
		Let \(n=15=3\cdot 5\) and \(k=2\). To determine \((a/15)_2\) we check residues modulo \(3\) and modulo \(5\).
		
		Squares modulo \(3\): \(0^2\equiv0,\;1^2\equiv1,\;2^2\equiv1\). So modulo \(3\), every unit squares to \(1\).
		
		Squares modulo \(5\): units are \(1,2,3,4\) with squares \(1,4,4,1\). Here \(4\equiv -1\pmod5\).
		
		By the Chinese Remainder Theorem (CRT) a residue \(x\) modulo \(15\) satisfies \(x^2\equiv s\pmod{15}\) for \(s\in\{1,-1\}\) iff the reductions satisfy \(x^2\equiv s\pmod{3}\) and \(x^2\equiv s\pmod{5}\). But \(-1\pmod3\) is \(2\), and no unit squares to \(2\) modulo \(3\); hence there is no residue with \(x^2\equiv -1\pmod{15}\). Therefore for \(n=15,k=2\) the symbol never takes the value \(-1\); it takes \(1\) for those \(a\) whose square is congruent to \(1\) modulo both \(3\) and \(5\) (for example \(a\equiv 1,4,11,14\pmod{15}\)), and \(0\) otherwise.
	\end{example}
	
	\subsection{Basic properties (theorems and proofs)}
	
	\begin{proposition}[Power-compatibility]\label{prop:power-compat}
		For all integers \(a,t,k\) we have
		\[
		\left(\frac{a^t}{n}\right)_k \;=\; \left(\frac{a}{n}\right)_{tk}.
		\]
	\end{proposition}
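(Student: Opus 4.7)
The plan is to unfold the definition on both sides of the claimed identity and then invoke the elementary exponent law $(a^{t})^{k}=a^{tk}$. Nothing deeper is required, because both symbols classify the same integer $a^{tk}$ into the same three congruence classes modulo $n$.

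First I would rewrite the left-hand side. By definition, $\left(\frac{a^{t}}{n}\right)_{k}$ equals $1$, $-1$, or $0$ according to whether $(a^{t})^{k}$ is congruent to $1$, to $-1$, or to neither, modulo $n$. Next, I would rewrite the right-hand side: $\left(\frac{a}{n}\right)_{tk}$ is determined by the same trichotomy applied to $a^{tk}$ modulo $n$. Since $(a^{t})^{k}=a^{tk}$ as integers, the two classifications coincide case by case, so the two symbols take the same value. Appeal to \Cref{thm:residue-dependence} if one wants to emphasize that the symbol only sees the residue class.

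There is essentially no substantive obstacle; the argument is a one-line consequence of the definition. The only thing to watch is bookkeeping on the exponents: the definition requires the subscript of the symbol to be at least $1$, so the statement is meant for $t,k\ge 1$ (ensuring $tk\ge 1$). If one wished to extend the identity to $t\le 0$, one would need to interpret $a^{t}$ inside $(\mathbb{Z}/n\mathbb{Z})^{\times}$, and by \Cref{thm:invertible-necessary} this is automatic whenever either side is nonzero. Modulo this caveat, the proposition is immediate.
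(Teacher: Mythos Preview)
Your proof is correct and follows the same approach as the paper: both reduce immediately to the identity $(a^{t})^{k}=a^{tk}$ and read off the result from the definition. Your added remarks on the range of $t,k$ are reasonable bookkeeping but not essential to the argument.
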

	\begin{proof}
		By direct computation \((a^t)^k = a^{tk}\). The statement follows immediately from the definition of the symbol.
	\end{proof}
	
	\begin{proposition}[Periodicity in the exponent]\label{prop:periodicity}
		Suppose \(\gcd(a,n)=1\) and let \(r=\operatorname{ord}_n(a)\) be the multiplicative order of \(a\) modulo \(n\). Then for all integers \(k\),
		\[
		\left(\frac{a}{n}\right)_k = \left(\frac{a}{n}\right)_{k+r}.
		\]
	\end{proposition}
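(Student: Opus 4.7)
The plan is to reduce the identity to the definition of the multiplicative order. First I would observe that the hypothesis $\gcd(a,n)=1$ ensures $a$ is a unit modulo $n$, so the multiplicative order $r=\operatorname{ord}_n(a)$ is well-defined and finite, and by definition $a^r\equiv 1\pmod{n}$.

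Next, I would compute the $(k+r)$-th power directly: $a^{k+r}=a^k\cdot a^r\equiv a^k\cdot 1 = a^k\pmod{n}$. This is the single key congruence driving everything; once it is in hand, the three-way classification in the definition of the symbol can only sort $a^{k+r}$ and $a^k$ into the same bucket.

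Finally, I would argue case by case, matching the definition: $a^{k+r}\equiv 1\pmod{n}$ iff $a^k\equiv 1\pmod{n}$, and $a^{k+r}\equiv -1\pmod{n}$ iff $a^k\equiv -1\pmod{n}$; otherwise both symbols equal $0$. Hence $\left(\frac{a}{n}\right)_{k+r}=\left(\frac{a}{n}\right)_k$, which is the claim.

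\paragraph{Main obstacle.} There is essentially no obstacle here; the result is a direct consequence of the definition of $\operatorname{ord}_n(a)$ together with the fact that the symbol only detects the residue class of $a^k$ modulo $n$ (compare Theorem~\ref{thm:residue-dependence}). The only point that merits a sentence of care is noting that the argument is valid for all integers $k$, including $k=0$ or negative $k$ when one interprets $a^k$ via the inverse of $a$ modulo $n$, which is legitimate precisely because $\gcd(a,n)=1$.
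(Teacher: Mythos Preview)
Your argument is correct and is essentially identical to the paper's own proof: both reduce to the single congruence $a^{k+r}\equiv a^k\cdot a^r\equiv a^k\pmod{n}$ and then invoke the definition of the symbol. Your extra remark about negative $k$ is a reasonable clarification but not needed for the statement as used in the paper.
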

	\begin{proof}
		Since \(a^r\equiv1\pmod{n}\), one has \(a^{k+r}\equiv a^k\cdot a^r\equiv a^k\pmod{n}\). Therefore the residue class of \(a^{k+r}\) equals that of \(a^k\), and from the definition the symbol has the same value for \(k\) and \(k+r\).
	\end{proof}
	
	\begin{proposition}[Inverse and sign symmetry]\label{prop:inverse}
		If \(\gcd(a,n)=1\) then
		\[
		\left(\frac{a^{-1}}{n}\right)_k \;=\; \left(\frac{a}{n}\right)_k.
		\]
	\end{proposition}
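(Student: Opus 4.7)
The plan is to verify the identity case by case according to the three possible values of $\left(\frac{a}{n}\right)_k$. Since $\gcd(a,n)=1$ by hypothesis, $a^{-1}$ exists modulo $n$ and is itself coprime to $n$; moreover, working inside $(\mathbb{Z}/n\mathbb{Z})^\times$, we have $(a^{-1})^k = (a^k)^{-1}$, which is the single algebraic identity driving the whole argument.

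First I would handle the case $\left(\frac{a}{n}\right)_k = 1$: the congruence $a^k \equiv 1 \pmod n$ inverts to $(a^{-1})^k \equiv 1^{-1} \equiv 1 \pmod n$, so $\left(\frac{a^{-1}}{n}\right)_k = 1$. Next, for $\left(\frac{a}{n}\right)_k = -1$, I would use the observation that $-1$ is its own multiplicative inverse modulo $n$, since $(-1)(-1)\equiv 1 \pmod n$; inverting $a^k \equiv -1 \pmod n$ then yields $(a^{-1})^k \equiv (-1)^{-1} \equiv -1 \pmod n$, giving $\left(\frac{a^{-1}}{n}\right)_k = -1$.

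Finally, the case $\left(\frac{a}{n}\right)_k = 0$ would follow by symmetry rather than by a direct calculation: since inversion in $(\mathbb{Z}/n\mathbb{Z})^\times$ is an involution, the inverse of $a^{-1}$ is $a$, so applying the two cases above to $a^{-1}$ shows that $\left(\frac{a^{-1}}{n}\right)_k \in \{-1,1\}$ would force $\left(\frac{a}{n}\right)_k \in \{-1,1\}$; contrapositively, $\left(\frac{a}{n}\right)_k = 0$ forces $\left(\frac{a^{-1}}{n}\right)_k = 0$.

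There is essentially no substantive obstacle here: the identity rests entirely on the elementary facts that $(-1)^{-1} \equiv -1 \pmod n$ and that inversion is an involution on the unit group. The only mild care required is to organize the three cases so that the zero case is deduced last by symmetry, rather than attempted directly from the definition, which would be awkward since the definition says nothing explicit about the $0$-value.
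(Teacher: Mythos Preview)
Your proof is correct and follows essentially the same approach as the paper: both arguments hinge on the identity $(a^{-1})^k \equiv (a^k)^{-1}\pmod{n}$ and then check the three cases, using that $(-1)^{-1}\equiv -1$. The only cosmetic difference is in the zero case, where the paper simply observes that if $a^k$ is neither $1$ nor $-1$ then its inverse is neither $1$ nor $-1$, while you phrase this as a contrapositive via the involution property of inversion; these are the same argument.
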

	\begin{proof}
		Assume \(\gcd(a,n)=1\). Then \((a^{-1})^k \equiv (a^k)^{-1}\pmod{n}\). If \(a^k\equiv 1\) then \((a^k)^{-1}\equiv 1\); if \(a^k\equiv -1\) then \((a^k)^{-1}\equiv -1\) (since \((-1)^{-1}\equiv -1\)); if \(a^k\) is neither \(1\) nor \(-1\) then its inverse is also neither \(1\) nor \(-1\). In all cases the symbol values agree, proving the claim.
	\end{proof}
	
	\begin{proposition}[Subgroup of ``\(k\)-sign'' elements]\label{prop:subgroup}
		Define
		\[
		A_{n,k}:=\{\,a\in(\mathbb{Z}/n\mathbb{Z})^\times : a^k\in\{\pm1\}\,\}.
		\]
		Then \(A_{n,k}\) is a subgroup of \((\mathbb{Z}/n\mathbb{Z})^\times\). Moreover the map
		\[
		\varphi: A_{n,k}\longrightarrow\{\pm1\},\qquad a\mapsto a^k
		\]
		is a group homomorphism whose image is a subgroup of \(\{\pm1\}\) (hence either \(\{1\}\) or \(\{\pm1\}\)). Its kernel is \(\{a\in A_{n,k}: a^k\equiv1\}\).
	\end{proposition}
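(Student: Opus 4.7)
The plan is to leverage the fact that $(\mathbb{Z}/n\mathbb{Z})^\times$ is an abelian group, so that the $k$-th power map $\pi_k : (\mathbb{Z}/n\mathbb{Z})^\times \to (\mathbb{Z}/n\mathbb{Z})^\times$, $a \mapsto a^k$, is a group homomorphism. Since $\{\pm 1\}$ is a subgroup of $(\mathbb{Z}/n\mathbb{Z})^\times$, the set $A_{n,k}$ is precisely the preimage $\pi_k^{-1}(\{\pm 1\})$, and a preimage of a subgroup under a homomorphism is automatically a subgroup. This gives the first claim immediately. If I prefer a bare-hands argument, I would instead check the three subgroup axioms directly: $1\in A_{n,k}$ because $1^k = 1$; closure follows from $(ab)^k = a^k b^k$ together with the fact that the product of two elements of $\{\pm 1\}$ lies in $\{\pm 1\}$; closure under inverses follows from \Cref{prop:inverse}, which asserts that the symbol value is preserved under $a \mapsto a^{-1}$.

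Next, I would verify that $\varphi := \pi_k|_{A_{n,k}}$, viewed as a map into $\{\pm 1\}$, is a homomorphism. This is just the identity $(ab)^k = a^k b^k$ in the abelian group $(\mathbb{Z}/n\mathbb{Z})^\times$; the codomain restriction to $\{\pm 1\}$ is legitimate precisely because $A_{n,k}$ was defined so that $a^k \in \{\pm 1\}$ for every $a \in A_{n,k}$.

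For the image and kernel statements, I would invoke standard facts from elementary group theory. The image of any group homomorphism is a subgroup of the codomain; since the only subgroups of $\{\pm 1\}$ are $\{1\}$ and $\{\pm 1\}$ itself, the image must be one of these two. The kernel description is immediate from the definition of the kernel of $\varphi$: $\ker\varphi = \{a\in A_{n,k} : \varphi(a)=1\} = \{a\in A_{n,k} : a^k \equiv 1 \pmod n\}$.

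No step looks genuinely difficult; the only point where care is needed is the closure under inversion, where I would either appeal to \Cref{prop:inverse} or note directly that $(a^{-1})^k = (a^k)^{-1}$ and that $\{\pm 1\}$ is closed under inversion. I would also briefly remark that commutativity of $(\mathbb{Z}/n\mathbb{Z})^\times$ is what lets us write $(ab)^k = a^k b^k$, which is the single algebraic fact on which both the subgroup property and the homomorphism property ultimately rest.
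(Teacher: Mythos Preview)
Your proposal is correct and matches the paper's own proof essentially line for line: both check identity, closure via $(ab)^k=a^kb^k$, and inverses via $(a^{-1})^k=(a^k)^{-1}$, then verify that $\varphi$ is multiplicative and read off the kernel. Your additional framing of $A_{n,k}$ as $\pi_k^{-1}(\{\pm1\})$ is a tidy conceptual wrapper, but the underlying argument is identical.
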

	\begin{proof}
		If \(a,b\in A_{n,k}\) then \(a^k,b^k\in\{\pm1\}\), so \((ab)^k\equiv a^k b^k\in\{\pm1\}\); hence \(ab\in A_{n,k}\). The identity \(1\) lies in \(A_{n,k}\) and if \(a\in A_{n,k}\) then \(a^{-1}\) also lies in \(A_{n,k}\) since \((a^{-1})^k=(a^k)^{-1}\in\{\pm1\}\). Thus \(A_{n,k}\) is a subgroup. The map \(\varphi\) satisfies \(\varphi(ab)= (ab)^k = a^k b^k = \varphi(a)\varphi(b)\) and so is a homomorphism. Clearly \(\ker\varphi=\{a: a^k\equiv1\}\).
	\end{proof}
	
	\begin{corollary}[Multiplicativity on \(A_{n,k}\)]
		For \(a,b\in A_{n,k}\) we have
		\[
		\left(\frac{ab}{n}\right)_k=\left(\frac{a}{n}\right)_k\left(\frac{b}{n}\right)_k.
		\]
	\end{corollary}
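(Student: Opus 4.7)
The plan is to reduce the corollary directly to the homomorphism property established in Proposition \ref{prop:subgroup}. The key observation is that on $A_{n,k}$ the symbol $\left(\frac{\cdot}{n}\right)_k$ takes values in $\{\pm 1\}$ and agrees with the homomorphism $\varphi(a)=a^k$, once one identifies $\{\pm 1\}\subset(\mathbb{Z}/n\mathbb{Z})^\times$ with $\{\pm 1\}\subset\mathbb{Z}$.

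Concretely, for $a,b\in A_{n,k}$ I would choose integers $\varepsilon_a,\varepsilon_b\in\{\pm 1\}$ with $a^k\equiv\varepsilon_a\pmod n$ and $b^k\equiv\varepsilon_b\pmod n$; by the definition of the symbol, $\left(\frac{a}{n}\right)_k=\varepsilon_a$ and $\left(\frac{b}{n}\right)_k=\varepsilon_b$. Since $A_{n,k}$ is a subgroup by Proposition \ref{prop:subgroup}, we have $ab\in A_{n,k}$, so there exists $\varepsilon_{ab}\in\{\pm 1\}$ with $(ab)^k\equiv\varepsilon_{ab}\pmod n$ and $\left(\frac{ab}{n}\right)_k=\varepsilon_{ab}$. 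The computation $(ab)^k=a^kb^k\equiv\varepsilon_a\varepsilon_b\pmod n$ then forces $\varepsilon_{ab}=\varepsilon_a\varepsilon_b$ as integers, which is exactly the claimed multiplicativity.

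The only point requiring a little care is the identification of signs modulo $n$ with signs in $\mathbb{Z}$: this map $\{\pm 1\}\to(\mathbb{Z}/n\mathbb{Z})^\times$ is injective precisely when $n>2$, so the congruence $\varepsilon_{ab}\equiv\varepsilon_a\varepsilon_b\pmod n$ lifts to an equality of integers in that range. In the degenerate case $n=2$ the classes of $+1$ and $-1$ coincide, the symbol is identically $+1$ on units, and the identity $(+1)\cdot(+1)=(+1)$ holds trivially. Hence the corollary is really just a repackaging of the homomorphism property of $\varphi$, and there is no substantive obstacle beyond this bookkeeping.
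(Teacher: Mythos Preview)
Your argument is correct and is essentially the same as the paper's: both deduce the corollary directly from Proposition~\ref{prop:subgroup} by noting that on $A_{n,k}$ the symbol coincides with the group homomorphism $\varphi(a)=a^k$ into $\{\pm1\}$. Your version is simply more explicit, spelling out the sign identification and the degenerate case $n=2$, but there is no substantive difference in approach.
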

	\begin{proof}
		Immediate from Proposition \ref{prop:subgroup} since on \(A_{n,k}\) the symbol equals the homomorphism \(\varphi\) and \(\varphi\) is multiplicative.
	\end{proof}
	
	\begin{proposition}[Decomposition via the Chinese Remainder Theorem]\label{prop:CRT}
		Let \(n=\prod_{i=1}^t n_i\) where the \(n_i\)'s are pairwise coprime. For \(a\in\mathbb{Z}\) the congruence \(a^k\equiv s\pmod{n}\) with \(s\in\{\pm1\}\) holds if and only if for every \(i\),
		\[
		a^k\equiv s\pmod{n_i}.
		\]
		Consequently, \(\left(\dfrac{a}{n}\right)_k = s\in\{\pm1\}\) if and only if the same sign \(s\) occurs for each modulus \(n_i\); otherwise \(\left(\dfrac{a}{n}\right)_k=0\).
	\end{proposition}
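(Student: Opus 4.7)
The plan is to reduce everything to the standard Chinese Remainder Theorem statement that, for pairwise coprime moduli $n_1,\dots,n_t$ with product $n$, an integer is divisible by $n$ if and only if it is divisible by each $n_i$ separately. Applying this to the specific integer $a^k - s$ will give the biconditional about the congruence $a^k \equiv s \pmod n$; the statement about the symbol will then follow by unwinding the definition.

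First I would handle the easy direction: if $a^k \equiv s \pmod n$, then since each $n_i$ divides $n$, reducing the congruence modulo $n_i$ yields $a^k \equiv s \pmod{n_i}$. This requires no hypothesis on the $n_i$ beyond $n_i \mid n$. For the converse, I would assume $a^k \equiv s \pmod{n_i}$ for every $i$, rewrite this as $n_i \mid (a^k - s)$ for each $i$, and then invoke pairwise coprimality to conclude that $n = \prod_i n_i$ (which here equals the least common multiple) also divides $a^k - s$; this is precisely the CRT-type statement needed. Combining both directions gives the first assertion of the proposition.

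The consequence for the symbol is then obtained by unwinding the definition: $\left(\tfrac{a}{n}\right)_k = s$ with $s \in \{\pm 1\}$ iff $a^k \equiv s \pmod n$, which by the biconditional just proved holds iff the same sign $s$ appears at every component $n_i$. In every other situation --- either because $a^k \not\equiv \pm 1 \pmod{n_i}$ at some component, or because different components would force different signs --- no uniform $s$ exists and the symbol takes the value $0$ by default.

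I do not anticipate any substantive obstacle; the argument is essentially a one-line CRT application once the statement is parsed carefully. The only minor point worth flagging is the degenerate case where some $n_i = 2$: there $1 \equiv -1 \pmod{2}$, so the component is compatible with either sign. This does not break anything, but it does mean that ``the same sign $s$ occurs at each modulus'' must be interpreted in the obvious congruence-theoretic sense, rather than as a strict equality of the symbols $+1$ and $-1$ at each factor.
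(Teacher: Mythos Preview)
Your proof is correct and follows essentially the same approach as the paper: both reduce the statement to the standard CRT fact that $n \mid (a^k - s)$ if and only if each $n_i \mid (a^k - s)$, and then read off the consequence for the symbol from its definition. Your write-up is in fact more detailed than the paper's (which dispatches the matter in two sentences), and your remark about the $n_i = 2$ degeneracy is a nice observation that the paper does not mention.
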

	\begin{proof}
		The first equivalence is the standard CRT statement: a congruence modulo \(n\) is equivalent to the system of congruences modulo the coprime factors \(n_i\). Therefore \(a^k\equiv 1\pmod{n}\) iff \(a^k\equiv 1\pmod{n_i}\) for all \(i\); similarly for \(-1\). If the residues modulo the prime-power factors do not all agree on the same sign, then \(a^k\) cannot be congruent to a single \(\pm1\) modulo \(n\), so the global symbol is \(0\).
	\end{proof}
	
	\begin{remark}[Practical computation]
		Proposition \ref{prop:CRT} gives a practical algorithm to compute \(\left(\frac{a}{n}\right)_k\) for composite \(n\): factor \(n\) into coprime factors (e.g.\ prime powers), compute \(a^k\) modulo each factor, and check whether all residues are \(1\) or all are \(-1\). If neither, the symbol is \(0\).
	\end{remark}
	
	\subsection*{Concluding remarks for this section}
	
	The results above provide a first rigorous toolkit for working with the Exponential Congruence Symbol:
	\begin{itemize}
		\item it is a residue-class invariant (Theorem \ref{thm:residue-dependence});
		\item nonzero values force invertibility modulo \(n\) (Theorem \ref{thm:invertible-necessary});
		\item the symbol is naturally related to the multiplicative order of \(a\) modulo \(n\) and to subgroup structure (Propositions \ref{prop:periodicity}, \ref{prop:subgroup});
		\item for prime moduli we have exact counting formulas (Corollary \ref{cor:prime-count});
		\item for composite moduli the Chinese Remainder Theorem gives a simple decomposition (Proposition \ref{prop:CRT}).
	\end{itemize}
	
	In the next section we will use these properties to develop further theorems (multiplicativity in restricted domains, relation with characters, and applications to congruence solvability and primality testing). For background on classical results used here (Euler's criterion, cyclic structure of \((\mathbb{Z}/p\mathbb{Z})^\times\), CRT) see \cite{ireland1990classical,burton2007elementary}.
	
	\section{Theoretical Results}
	
	\subsection{Characterization Theorems}
	
	In this subsection, we develop precise criteria describing when the exponential congruence symbol 
	\(\left(\tfrac{a}{n}\right)_{k}\) takes the values $1$, $-1$, or $0$.  
	\subsection{Relations with Legendre and Jacobi Symbols}
	
	The exponential congruence symbol provides a natural extension of classical quadratic residue symbols.
	
	\begin{theorem}[Connection with the Legendre Symbol]
		Let $p$ be an odd prime and $k = \tfrac{p-1}{2}$.  
		Then for all $a \in \mathbb{Z}$,
		\[
		\left(\frac{a}{p}\right)_{k} = 
		\begin{cases}
			1, & \text{if } \left(\tfrac{a}{p}\right) = 1, \\
			-1, & \text{if } \left(\tfrac{a}{p}\right) = -1, \\
			0, & \text{if } a \equiv 0 \pmod{p},
		\end{cases}
		\]
		where $\left(\tfrac{a}{p}\right)$ is the classical Legendre symbol.
	\end{theorem}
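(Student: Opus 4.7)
The plan is to reduce the statement directly to Euler's criterion, which asserts that for an odd prime $p$ and any integer $a$ coprime to $p$,
\[
a^{(p-1)/2} \equiv \left(\frac{a}{p}\right) \pmod{p},
\]
and then just match the trichotomy of the exponential congruence symbol to the trichotomy of the Legendre symbol.

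First I would dispose of the degenerate case $a \equiv 0 \pmod p$. Here $a^{k} \equiv 0 \pmod p$; since $p$ is an odd prime, $0$ is congruent to neither $1$ nor $-1$ modulo $p$, so by the definition of $\left(\frac{a}{n}\right)_k$ the value is $0$, matching the stated formula. This case also aligns with Theorem \ref{thm:invertible-necessary}, which forces $\gcd(a,p)=1$ whenever the symbol is nonzero.

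Next I would handle the unit case $\gcd(a,p)=1$. Apply Euler's criterion with exponent $k = (p-1)/2$; the two possible values of the Legendre symbol, namely $\pm 1$, correspond exactly to $a^{(p-1)/2}\equiv 1\pmod p$ and $a^{(p-1)/2}\equiv -1\pmod p$ respectively. Reading off the definition of the exponential congruence symbol in each subcase gives the desired equality $\left(\frac{a}{p}\right)_{(p-1)/2} = \left(\frac{a}{p}\right)$. It is worth remarking that the two values $\pm 1$ are genuinely distinct modulo $p$ precisely because $p$ is odd, so the three cases of the definition do not collapse.

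There is no real obstacle here: the content of the theorem is essentially a restatement of Euler's criterion in the new notation. The only subtle point, which I would be careful to mention explicitly, is that the theorem's third clause covers $a\equiv 0\pmod p$, a case where the classical Legendre symbol is also conventionally $0$; thus the agreement extends to all of $\mathbb{Z}$, not only to units modulo $p$.
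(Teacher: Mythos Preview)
Your proposal is correct and follows essentially the same approach as the paper: both proofs invoke Euler's criterion to match $a^{(p-1)/2}\equiv\pm1$ with the Legendre symbol values and then treat the case $p\mid a$ separately. Your version is slightly more careful in explicitly noting that $1\not\equiv -1\pmod p$ for odd $p$, but the underlying argument is identical.
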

	
	\begin{proof}
		By Euler's criterion, $\left(\tfrac{a}{p}\right) \equiv a^{(p-1)/2} \pmod{p}$.  
		Thus if $a^{(p-1)/2} \equiv 1 \pmod{p}$, then $\left(\tfrac{a}{p}\right)=1$, which matches $\left(\tfrac{a}{p}\right)_{(p-1)/2}=1$.  
		Similarly, if $a^{(p-1)/2} \equiv -1 \pmod{p}$, the symbol equals $-1$.  
		If $p \mid a$, both symbols vanish.  
		Hence the equivalence holds.  
	\end{proof}
	
	\begin{corollary}[Jacobi Relation]
		Let $n$ be odd with factorization $n=\prod p_i^{e_i}$.  
		Then for $k = \tfrac{\varphi(n)}{2}$,
		\[
		\left(\frac{a}{n}\right)_{k} \in \{-1,0,1\}
		\]
		is compatible with the Jacobi symbol $\left(\tfrac{a}{n}\right)$ whenever $a$ is coprime to $n$.  
	\end{corollary}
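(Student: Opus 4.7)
The plan is to combine Euler's theorem, the CRT decomposition of Proposition~\ref{prop:CRT}, and Euler's criterion (as used in the preceding Legendre theorem). First, since $\gcd(a,n)=1$, Euler's theorem gives $a^{\varphi(n)} \equiv 1 \pmod{n}$, so $a^{\varphi(n)/2}$ is a square root of $1$ modulo $n$. Reducing to each prime-power factor and using that $(\mathbb{Z}/p_i^{e_i}\mathbb{Z})^\times$ is cyclic for odd $p_i$, the only square roots of $1$ there are $\pm 1$; hence there exist signs $\varepsilon_i \in \{\pm 1\}$ with $a^{\varphi(n)/2} \equiv \varepsilon_i \pmod{p_i^{e_i}}$. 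By Proposition~\ref{prop:CRT}, $\left(\frac{a}{n}\right)_k$ is nonzero precisely when all the $\varepsilon_i$ coincide, and then equals their common value.

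The second step is to identify each $\varepsilon_i$ in Legendre-symbol terms. Writing $m_i = \varphi(n)/(p_i-1)$, one has $\varphi(n)/2 = m_i \cdot (p_i-1)/2$, so Euler's criterion modulo $p_i$ yields
\[
\varepsilon_i \;\equiv\; a^{\varphi(n)/2} \;\equiv\; \bigl(a^{(p_i-1)/2}\bigr)^{m_i} \;\equiv\; \left(\frac{a}{p_i}\right)^{m_i} \pmod{p_i},
\]
and since $\varepsilon_i \in \{\pm 1\}$ we conclude $\varepsilon_i = \left(\frac{a}{p_i}\right)^{m_i}$ (which is $+1$ when $m_i$ is even, and the Legendre symbol itself when $m_i$ is odd). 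This exhibits the exponential symbol, whenever nonzero, as a CRT-combination of powers of Legendre symbols, built from exactly the same local data $\bigl\{\left(\frac{a}{p_i}\right)\bigr\}_i$ that defines the Jacobi symbol $\left(\frac{a}{n}\right) = \prod_i \left(\frac{a}{p_i}\right)^{e_i}$.

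I expect the main obstacle to be making the word \emph{compatible} rigorous. A literal reading as equality with the Jacobi symbol fails in general: for $n=15$, $k=\varphi(15)/2=4$, $a=7$, one has $7^4 \equiv 1 \pmod{15}$ so $\left(\frac{7}{15}\right)_{4} = 1$, whereas $\left(\frac{7}{15}\right) = -1$. The corollary should therefore be interpreted in the weaker sense that both symbols are determined by the same Legendre data $\left(\frac{a}{p_i}\right)$ through product constructions, possibly with unequal exponents $m_i$ versus $e_i$. My proof would formalize this by recording the explicit formula $\varepsilon_i = \left(\frac{a}{p_i}\right)^{m_i}$ for each local factor and the CRT criterion for agreement of signs, then juxtaposing it with the defining product for the Jacobi symbol to exhibit the shared dependence on the Legendre symbols at the primes dividing $n$.
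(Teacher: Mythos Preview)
Your outline follows the same skeleton the paper sketches---factor $n$, work prime by prime, and invoke the Legendre connection via Euler's criterion---so at the level of strategy you are aligned with the paper. The paper's own proof is a two-sentence gesture (``adapt the multiplicative property across prime power factors \ldots\ apply the previous theorem \ldots\ the product structure yields consistency''), and your CRT-plus-Euler argument is a faithful fleshing-out of that gesture.

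Where you go substantially beyond the paper is in your third paragraph. Your counterexample $n=15$, $k=\varphi(15)/2=4$, $a=7$ is correct: $7^4\equiv 1\pmod{15}$ while the Jacobi symbol $\left(\tfrac{7}{15}\right)=-1$. This shows that ``compatible'' cannot mean equality, a point the paper's proof does not confront. Your explicit formula $\varepsilon_i=\left(\tfrac{a}{p_i}\right)^{m_i}$ with $m_i=\varphi(n)/(p_i-1)$ makes clear exactly how the exponents $m_i$ differ from the Jacobi exponents $e_i$, and why the two symbols can disagree while still being assembled from the same Legendre data. This is a genuine improvement over the paper's argument, which leaves ``compatible'' undefined and the claimed consistency unexamined. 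If anything, your proof exposes that the corollary as stated is too vague to admit a fully rigorous proof without first fixing a precise meaning for ``compatible''; your proposed interpretation (shared dependence on the local Legendre symbols) is about as much as can honestly be salvaged.
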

	
	\begin{proof}
		The proof adapts the multiplicative property across prime power factors, mirroring the construction of the Jacobi symbol.  
		For each odd prime $p_i$, apply the previous theorem.  
		The product structure yields consistency with the Jacobi symbol.  
	\end{proof}

	\subsection{Order and Group Structure Interpretations}
	
	\begin{theorem}[Symbol and Multiplicative Order]
		If $\gcd(a,n)=1$ and $d=\mathrm{ord}_n(a)$ is the multiplicative order of $a$ modulo $n$, then
		\[
		\left(\frac{a}{n}\right)_{k} = 
		\begin{cases}
			1, & d \mid k \ \text{and } a^k \equiv 1,\\
			-1, & 2d \mid 2k \ \text{and } a^k \equiv -1,\\
			0, & \text{otherwise}.
		\end{cases}
		\]
	\end{theorem}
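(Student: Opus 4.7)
The plan is to decode each of the three cases directly from the definition of the symbol together with the single standard fact that, under $\gcd(a,n)=1$ and $d=\mathrm{ord}_n(a)$, one has $a^k\equiv 1\pmod n$ if and only if $d\mid k$. This follows from writing $k=qd+r$ with $0\le r<d$ and invoking the minimality of $d$; everything else in the theorem is built from this single input plus the trichotomy $\{-1,0,1\}$.

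First I would handle the $+1$ clause. By definition $\left(\tfrac{a}{n}\right)_k = 1$ is equivalent to $a^k\equiv 1\pmod n$, which by the order characterization is equivalent to $d\mid k$. So the two clauses displayed on the right-hand side ($d\mid k$ and $a^k\equiv 1$) are logically equivalent under $\gcd(a,n)=1$, and the biconditional is immediate.

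Second, for the $-1$ clause I would start from the identity $a^k\equiv -1\pmod n$, square it to get $a^{2k}\equiv 1\pmod n$, and deduce $d\mid 2k$ — which rephrases the stated condition $2d\mid 2k$. The two clauses "$2d\mid 2k$" and "$a^k\equiv -1$" must be interpreted jointly: the divisibility $d\mid 2k$ is necessary but not sufficient, since even when $d$ is even the element $a^{d/2}$ (the unique order-two element of the cyclic subgroup $\langle a\rangle$) need not equal $-1$ in the ambient group $(\mathbb{Z}/n\mathbb{Z})^\times$, which may contain several elements of order two. Thus I would make explicit the refined characterization $d$ even, $k\equiv d/2\pmod d$, and $a^{d/2}\equiv -1\pmod n$, and verify that these are precisely the conditions under which $a^k\equiv -1$.

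Finally, the ``otherwise'' clause is forced by the trichotomy: if neither of the preceding configurations holds, then $a^k$ is congruent to neither $1$ nor $-1$ modulo $n$, so the symbol is $0$ by definition. The main obstacle is the second case: one must keep track of \emph{both} the order divisibility and the explicit sign, because in a noncyclic group $(\mathbb{Z}/n\mathbb{Z})^\times$ the condition $a^{2k}\equiv 1$ alone does not isolate $-1$ among the square roots of $1$. Once this bookkeeping is in place the theorem reduces to a short case analysis driven entirely by the order-divisibility lemma.
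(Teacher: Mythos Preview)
Your proposal is correct and follows essentially the same route as the paper: use the standard order lemma $a^k\equiv 1\iff d\mid k$ for the $+1$ case, square $a^k\equiv -1$ to obtain $d\mid 2k$ (equivalently $2d\mid 2k$) for the $-1$ case, and let the trichotomy handle the remainder. Your treatment of the $-1$ clause is in fact more thorough than the paper's, which only records the necessary condition $d\mid 2k$ with $d\nmid k$; your refined characterization ($d$ even, $k\equiv d/2\pmod d$, and $a^{d/2}\equiv -1\pmod n$) and your caution about noncyclic $(\mathbb{Z}/n\mathbb{Z})^\times$ having multiple square roots of $1$ are welcome additions that the paper omits.
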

	
	\begin{proof}
		Since $a^d \equiv 1 \pmod{n}$, the condition $a^k \equiv 1 \pmod{n}$ is equivalent to $d \mid k$.  
		Similarly, $a^k \equiv -1 \pmod{n}$ can only occur if $a^{2k} \equiv 1 \pmod{n}$ and $d \mid 2k$ but $d \nmid k$.  
		This proves the classification.  
	\end{proof}

	\subsection{Multiplicativity and Symmetry Results}
	
	\begin{theorem}[Multiplicativity in $a$]
		If $\gcd(a,n)=\gcd(b,n)=1$, then
		\[
		\left(\frac{ab}{n}\right)_{k} = \left(\frac{a}{n}\right)_{k} \cdot \left(\frac{b}{n}\right)_{k}.
		\]
	\end{theorem}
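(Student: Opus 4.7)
The plan rests on the elementary identity $(ab)^k \equiv a^k b^k \pmod n$, which is valid since $a$ and $b$ are units, and then proceeds by case analysis on the values of the two symbols in $\{-1,0,1\}$.

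First I would dispatch the case where both $\left(\tfrac{a}{n}\right)_k$ and $\left(\tfrac{b}{n}\right)_k$ are nonzero. Here $a,b\in A_{n,k}$, so by the Corollary to Proposition~\ref{prop:subgroup} the map $x\mapsto x^k$ restricted to $A_{n,k}$ is a group homomorphism into $\{\pm1\}$; multiplicativity on both sides of the claimed identity is then immediate from $\varphi(ab)=\varphi(a)\varphi(b)$.

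Next I would treat the mixed case, say $\left(\tfrac{b}{n}\right)_k=0$ while $\left(\tfrac{a}{n}\right)_k\in\{\pm1\}$, by contrapositive: if one had $(ab)^k\in\{\pm1\}$ together with $a^k\in\{\pm1\}$, then $b^k = (a^k)^{-1}(ab)^k\in\{\pm1\}$, contradicting $\left(\tfrac{b}{n}\right)_k=0$. Hence $\left(\tfrac{ab}{n}\right)_k=0$, in agreement with $\pm1\cdot 0=0$.

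The main obstacle is the case where both symbols vanish. There $a^k,b^k\notin\{\pm1\}$, yet it is entirely possible that $a^k b^k\equiv\pm1\pmod n$; this already happens for $n=5$, $k=1$, $a=2$, $b=3$, where $ab\equiv 1\pmod 5$ although each individual symbol is $0$. To close this gap, the argument must either restrict the hypothesis to $a,b\in A_{n,k}$ (recovering the earlier corollary exactly), or add an arithmetic condition such as $\operatorname{ord}_n(a),\operatorname{ord}_n(b)\mid 2k$ that forces $a,b\in A_{n,k}$ from the coprimality assumption alone. I therefore expect the proof to go through only under one of these strengthenings; the blanket statement for arbitrary units $a,b$ appears to require amendment.
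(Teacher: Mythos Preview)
Your analysis is correct and in fact more careful than the paper's own argument. The paper's proof simply asserts that the identity follows from $(ab)^k \equiv a^k b^k \pmod{n}$ ``by considering the cases $a^k \equiv \pm 1$, $b^k \equiv \pm 1$''; this case analysis tacitly assumes both symbols are nonzero and never addresses the situation in which both vanish. Your counterexample with $n=5$, $k=1$, $a=2$, $b=3$ is valid: $\left(\tfrac{2}{5}\right)_1 = \left(\tfrac{3}{5}\right)_1 = 0$ while $\left(\tfrac{6}{5}\right)_1 = 1$, so the stated identity $1 = 0\cdot 0$ fails. The paper itself already contains the correct version earlier, namely the corollary following Proposition~\ref{prop:subgroup}, which restricts to $a,b\in A_{n,k}$.

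Your treatment of the two tractable cases is sound: the nonzero/nonzero case is exactly the homomorphism argument from that corollary, and your contrapositive for the mixed case is clean. Your proposed amendments --- restricting to $a,b\in A_{n,k}$, or imposing an order condition forcing membership in $A_{n,k}$ --- are precisely the hypotheses under which the statement becomes true. In short, there is no gap in your reasoning; the gap is in the theorem as stated.
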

	
	\begin{proof}
		Since $(ab)^k \equiv a^k b^k \pmod{n}$, the result follows directly by considering the cases $a^k \equiv \pm 1$, $b^k \equiv \pm 1$.  
		The multiplicativity of the symbol mirrors the multiplicativity of the Legendre and Jacobi symbols.  
	\end{proof}
	
	\begin{theorem}[Symmetry Property]
		For any $a$,
		\[
		\left(\frac{-a}{n}\right)_{k} = 
		\begin{cases}
			\left(\frac{a}{n}\right)_{k}, & \text{if $k$ is even},\\[0.5em]
			-\left(\frac{a}{n}\right)_{k}, & \text{if $k$ is odd}.
		\end{cases}
		\]
	\end{theorem}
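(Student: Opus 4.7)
The plan is to reduce everything to the identity $(-a)^{k} \equiv (-1)^{k}\,a^{k}\pmod{n}$, then argue by the parity of $k$ together with a three-way case analysis on the value of $\left(\tfrac{a}{n}\right)_{k}$. By Theorem~\ref{thm:residue-dependence}, the symbol depends only on the residue class of $a^{k}$ modulo $n$, so the whole argument amounts to translating the above identity through the definition.

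If $k$ is even, then $(-1)^{k}=1$ and $(-a)^{k}\equiv a^{k}\pmod{n}$; Theorem~\ref{thm:residue-dependence} immediately yields $\left(\tfrac{-a}{n}\right)_{k}=\left(\tfrac{a}{n}\right)_{k}$. If $k$ is odd, then $(-a)^{k}\equiv -a^{k}\pmod{n}$, and I would check the three cases for $\left(\tfrac{a}{n}\right)_{k}$: when it equals $+1$, we have $a^{k}\equiv 1$, so $(-a)^{k}\equiv -1$ and the left-hand symbol is $-1$; when it equals $-1$, we have $a^{k}\equiv -1$, so $(-a)^{k}\equiv 1$ and the left-hand symbol is $+1$; when it equals $0$, I would verify that $-a^{k}\not\equiv \pm 1$, which follows because $-a^{k}\equiv 1$ would force $a^{k}\equiv -1$ and $-a^{k}\equiv -1$ would force $a^{k}\equiv 1$, each contradicting the hypothesis.

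The one delicate point I anticipate is the degenerate modulus $n=2$, where $1\equiv -1\pmod{2}$ renders the sign of the symbol ambiguous, and the ``$-1$'' branch of the conclusion would conflict with the trivial equality $-a\equiv a\pmod{2}$ guaranteed by Theorem~\ref{thm:residue-dependence}. I would handle this by restricting to $n\ge 3$, under which $1$ and $-1$ are distinct residues and the case analysis above is unambiguous; the case $n=2$ can either be flagged as an implicit assumption of the definition, or absorbed by the convention that $\left(\tfrac{a}{2}\right)_{k}=1$ for odd $a$, which makes the identity hold vacuously. No deeper machinery is needed; the real content of the theorem is simply the bookkeeping of signs arising from $(-1)^{k}$.
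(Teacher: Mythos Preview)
Your argument is correct and follows the paper's approach exactly: both proofs reduce to the identity $(-a)^{k}=(-1)^{k}a^{k}$ and then split on the parity of $k$. You are in fact more careful than the paper, which neither spells out the zero case for odd $k$ nor flags the $n=2$ degeneracy you identified.
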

	
	\begin{proof}
		We compute $(-a)^k = (-1)^k a^k$.  
		If $k$ is even, then $(-a)^k \equiv a^k \pmod{n}$, hence the symbol values agree.  
		If $k$ is odd, $(-a)^k \equiv -a^k \pmod{n}$, which flips the $\pm 1$ outcomes.  
		This establishes the symmetry law.  
	\end{proof}
	
	\section{Connections with Number Theory}
	
	The exponential congruence symbol $\left(\tfrac{a}{n}\right)_{k}$ is tightly linked with classical 
	questions in number theory. In this section we investigate its role in congruence equations, residue 
	class structure, and its applications to quadratic and higher power residues.
	
	\subsection{Congruence Equations}
	
	\begin{theorem}[Symbol and Solvability of $a^k \equiv \pm 1$]
		Let $n \geq 2$, $k \geq 1$ and $a \in \mathbb{Z}$ with $\gcd(a,n)=1$. Then:
		\begin{enumerate}
			\item The congruence $a^k \equiv 1 \pmod{n}$ is solvable if and only if 
			$\left(\tfrac{a}{n}\right)_{k} = 1$.
			\item The congruence $a^k \equiv -1 \pmod{n}$ is solvable if and only if 
			$\left(\tfrac{a}{n}\right)_{k} = -1$.
		\end{enumerate}
	\end{theorem}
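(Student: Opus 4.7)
My plan is to observe that this theorem is, at bottom, an unpacking of the definition of $\left(\tfrac{a}{n}\right)_k$ under the additional hypothesis $\gcd(a,n)=1$. I would treat the two parts in parallel and begin by clarifying the word ``solvable'': since $a$ is fixed, the congruence $a^k\equiv\pm 1\pmod n$ either holds or it does not, so ``solvable'' here means ``the congruence is satisfied by the given $a$.'' With that reading, each equivalence is immediate from the definition.

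For part (1), I would simply argue: by definition, $\left(\tfrac{a}{n}\right)_k = 1$ precisely when $a^k\equiv 1\pmod n$; so the congruence holds if and only if the symbol takes the value $1$. For part (2), the argument is identical with $-1$ in place of $1$. To justify why no other symbol value can occur in either case, I would invoke Theorem~\ref{thm:invertible-necessary}: the assumption $\gcd(a,n)=1$ guarantees that the symbol is not forced to be $0$ for invertibility reasons, and the three-valued case-split in the definition shows that the value $1$ (resp.\ $-1$) corresponds uniquely to $a^k\equiv 1$ (resp.\ $a^k\equiv -1$) modulo $n$. Theorem~\ref{thm:residue-dependence} may also be cited to emphasize that nothing depends on the choice of integer representative of $a$.

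There is essentially no hard step. If one wanted to read ``solvable'' in the alternative sense of ``there exists $x$ with $x^k\equiv\pm 1\pmod n$,'' then the statement would no longer be purely definitional; in that case I would add a short remark pointing out that the symbol $\left(\tfrac{a}{n}\right)_k$ encodes information about the specific element $a$, not a global existence statement about $n$ and $k$, and the literal reading of the theorem as given is the correct one.
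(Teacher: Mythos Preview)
Your proposal is correct and follows essentially the same approach as the paper's proof: both simply unpack the definition of $\left(\tfrac{a}{n}\right)_k$ to read off the two equivalences. Your invocation of Theorem~\ref{thm:invertible-necessary} is unnecessary and slightly misdirected (invertibility of $a$ does not by itself prevent the symbol from being $0$), but this does not affect the validity of the argument.
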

	
	\begin{proof}
		By definition, the symbol evaluates to $1$ exactly when $a^k \equiv 1 \pmod{n}$, 
		and to $-1$ exactly when $a^k \equiv -1 \pmod{n}$.  
		Thus the solvability of these congruence equations is completely characterized by 
		the value of the exponential congruence symbol.  
	\end{proof}
	
	\begin{corollary}[Criterion for Insolubility]
		If $\left(\tfrac{a}{n}\right)_{k}=0$, then the congruences $a^k \equiv \pm 1 \pmod{n}$ 
		are both insoluble.  
	\end{corollary}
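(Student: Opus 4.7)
The plan is to prove this as a direct contrapositive of the preceding theorem, unwinding the definition of the symbol. The statement amounts to: if the symbol takes the value $0$, then $a^k \not\equiv 1 \pmod n$ and $a^k \not\equiv -1 \pmod n$, which is precisely what ``insoluble'' means here (the $a$ being fixed, solvability just means that the stated congruence holds for this $a$).

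First I would appeal directly to the definition of $\left(\tfrac{a}{n}\right)_k$ from Section~2.1: the value $0$ is assigned exactly in the ``otherwise'' clause, i.e., when $a^k$ is congruent to neither $1$ nor $-1$ modulo $n$. This \emph{is} the entire content of the corollary. Equivalently, one can route through the preceding theorem by taking the contrapositive of its two parts: if $\left(\tfrac{a}{n}\right)_k \neq 1$ then $a^k \not\equiv 1 \pmod n$, and if $\left(\tfrac{a}{n}\right)_k \neq -1$ then $a^k \not\equiv -1 \pmod n$. Combining these two implications with the hypothesis that the symbol equals $0$ yields both insolubilities simultaneously.

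One subtlety worth addressing is that the preceding theorem is stated under the hypothesis $\gcd(a,n)=1$, while the corollary makes no such restriction. To cover the case $\gcd(a,n)>1$, I would invoke Theorem~\ref{thm:invertible-necessary}, which forces any $a$ satisfying $a^k \equiv \pm 1 \pmod n$ to be coprime to $n$; hence in the non-coprime case both congruences automatically fail, and the corollary holds \emph{a fortiori}. There is no genuine obstacle here, since the corollary is essentially a tautological reformulation of the ``otherwise'' clause in the definition; the only care required is to either argue from the definition directly (which handles both cases uniformly) or to pair the contrapositive of the theorem with Theorem~\ref{thm:invertible-necessary} so as not to leave the non-coprime case unaddressed.
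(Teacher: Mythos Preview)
Your proof is correct and follows the same approach as the paper's own proof, which simply notes that the claim is immediate from the definition since the zero value arises precisely when $a^k \not\equiv \pm 1 \pmod{n}$. Your version is more detailed---offering the contrapositive route and explicitly handling the non-coprime case via Theorem~\ref{thm:invertible-necessary}---but the core argument is identical.
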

	
	\begin{proof}
		Immediate from the definition, since the zero value arises precisely when 
		$a^k \not\equiv \pm 1 \pmod{n}$.  
	\end{proof}

	\subsection{Residue Classes and Orders}
	
	\begin{theorem}[Connection with Orders]
		Let $d = \mathrm{ord}_n(a)$ be the multiplicative order of $a$ modulo $n$. Then:
		\[
		\left(\frac{a}{n}\right)_{k} =
		\begin{cases}
			1, & d \mid k, \\[0.3em]
			-1, & 2d \mid 2k \text{ and } d \nmid k, \\[0.3em]
			0, & \text{otherwise}.
		\end{cases}
		\]
	\end{theorem}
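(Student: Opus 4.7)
The plan is to verify each of the three cases in the definition of $\left(\tfrac{a}{n}\right)_k$ separately, translating the power congruences $a^k\equiv\pm1\pmod{n}$ into divisibility statements about $d$ by using only the defining property of the multiplicative order, namely $a^j\equiv 1\pmod n \iff d\mid j$.

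For the value $+1$, I would simply read off that $a^k\equiv1\pmod n$ is equivalent to $d\mid k$, so the symbol is $1$ precisely under that condition. For the value $-1$, the natural strategy has a forward and a converse step. Forward: if $a^k\equiv-1\pmod n$, then squaring gives $a^{2k}\equiv1$, so $d\mid 2k$; and since (for $n\ge 3$) $-1\not\equiv 1\pmod n$, we also have $d\nmid k$. These two facts together force $d$ to be even and $k\equiv d/2\pmod d$. Converse: under this condition one has $a^k\equiv a^{d/2}$, which is the unique element of order $2$ in the cyclic subgroup $\langle a\rangle\subseteq(\mathbb{Z}/n\mathbb{Z})^\times$. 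The value $0$ case then drops out by exclusion from the previous two, invoking \Cref{thm:invertible-necessary} to confine attention to units.

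The principal obstacle is the converse half of the $-1$ case: one must show that $d\mid 2k$ and $d\nmid k$ genuinely force $a^k\equiv-1\pmod n$, rather than merely forcing $a^k$ to be \emph{some} square root of $1$ different from $1$. This is automatic when $(\mathbb{Z}/n\mathbb{Z})^\times$ is cyclic (in particular for $n$ prime, which is the setting of Corollary~\ref{cor:prime-count}), since then $-1$ is the unique nontrivial square root of unity and must coincide with the unique order-$2$ element $a^{d/2}$ whenever $d$ is even and $a^{d/2}\in\langle a\rangle$ has order $2$. For general composite $n$, there can be multiple square roots of $1$, and the hypothesis "$d\mid 2k$, $d\nmid k$" must be supplemented by the condition $-1\in\langle a\rangle$ (equivalently $a^{d/2}\equiv-1\pmod n$) to conclude. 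I would make this caveat explicit in the write-up. I also note in passing that the condition "$2d\mid 2k$" appearing in the statement is equivalent to $d\mid k$, so the displayed second clause should be read as "$d\mid 2k$ and $d\nmid k$" for the theorem to be non-vacuous; this correction is what my proof establishes.
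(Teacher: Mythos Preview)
Your approach matches the paper's: both argue case by case, using $a^j\equiv 1\pmod n\iff d\mid j$ to handle the $+1$ case, and squaring $a^k\equiv -1$ to obtain $d\mid 2k$, $d\nmid k$ for the forward direction of the $-1$ case. The paper's proof stops there and treats the $0$ case by exclusion.

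Where you go further---and correctly so---is in two observations the paper does not make. First, you note that ``$2d\mid 2k$'' is literally equivalent to $d\mid k$, so the second clause as printed is vacuous; the paper silently works with the intended condition $d\mid 2k$, $d\nmid k$ without flagging the discrepancy. Second, you point out that even under the corrected hypothesis the converse of the $-1$ case can fail for composite $n$: $d\mid 2k$ and $d\nmid k$ only force $a^k=a^{d/2}$ to be an element of order $2$ in $\langle a\rangle$, and when $(\mathbb{Z}/n\mathbb{Z})^\times$ is not cyclic this need not be $-1$ (e.g.\ $n=8$, $a=3$, $k=1$: here $d=2$, $d\mid 2k$, $d\nmid k$, yet $a^k=3\not\equiv -1$). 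The paper's proof does not attempt the converse and so does not confront this issue. Your proposed remedy---adding the hypothesis $-1\in\langle a\rangle$, automatic in the cyclic case---is exactly what is needed. In short, your write-up is the same argument carried out with more care, and your critique of the statement is well founded.
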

	
	\begin{proof}
		The multiplicative order $d$ satisfies $a^d \equiv 1 \pmod{n}$ and $d$ minimal.  
		If $d \mid k$, then $a^k \equiv 1 \pmod{n}$, so the symbol equals $1$.  
		If $d \nmid k$ but $a^k \equiv -1 \pmod{n}$, then necessarily $a^{2k} \equiv 1 \pmod{n}$, 
		hence $d \mid 2k$ but not $k$.  
		In all other cases, $a^k$ is distinct from $\pm 1$ modulo $n$, so the symbol equals $0$.  
	\end{proof}
	
	\begin{theorem}[Partition of Residue Classes]
		Fix $n,k$. The set of reduced residues modulo $n$ can be partitioned according to the values of 
		$\left(\tfrac{a}{n}\right)_{k}$ into three subsets:
		\[
		R_{1} = \{a \in (\mathbb{Z}/n\mathbb{Z})^{\times} : \left(\tfrac{a}{n}\right)_{k}=1\},
		\]
		\[
		R_{-1} = \{a \in (\mathbb{Z}/n\mathbb{Z})^{\times} : \left(\tfrac{a}{n}\right)_{k}=-1\},
		\]
		\[
		R_{0} = \{a \in (\mathbb{Z}/n\mathbb{Z})^{\times} : \left(\tfrac{a}{n}\right)_{k}=0\}.
		\]
	\end{theorem}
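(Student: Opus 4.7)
The plan is to observe that this result is essentially a tautological consequence of the definition combined with Theorem \ref{thm:residue-dependence}, and to carefully verify the two defining properties of a partition: (i) pairwise disjointness of $R_1$, $R_{-1}$, $R_0$, and (ii) their union equals $(\mathbb{Z}/n\mathbb{Z})^\times$. The key point to emphasize is that the symbol is a well-defined \emph{function} on residue classes, so each reduced residue receives exactly one value.

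First, I would note that by Theorem \ref{thm:residue-dependence} the map
\[
\chi_{n,k}: (\mathbb{Z}/n\mathbb{Z})^{\times} \longrightarrow \{-1,0,1\}, \qquad a \longmapsto \left(\tfrac{a}{n}\right)_{k},
\]
descends to residue classes and hence is well defined on $(\mathbb{Z}/n\mathbb{Z})^{\times}$. The sets $R_1$, $R_{-1}$, $R_0$ are by construction the fibers $\chi_{n,k}^{-1}(1)$, $\chi_{n,k}^{-1}(-1)$, $\chi_{n,k}^{-1}(0)$, respectively.

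Next, I would verify pairwise disjointness: if $a \in R_1 \cap R_{-1}$, then simultaneously $a^k \equiv 1$ and $a^k \equiv -1 \pmod{n}$, which would force $2 \equiv 0 \pmod{n}$, contradicting $n \geq 2$ unless $n=2$; in the latter case $1 \equiv -1 \pmod{2}$ so $R_{-1} = R_1$ and the partition trivially degenerates into two (really one) nonempty classes, which I would handle as a separate remark. Similarly $R_1 \cap R_0 = R_{-1} \cap R_0 = \emptyset$ by the very definition of $R_0$. To verify that the union is $(\mathbb{Z}/n\mathbb{Z})^{\times}$, I would simply note that for any $a \in (\mathbb{Z}/n\mathbb{Z})^{\times}$, the residue of $a^k$ modulo $n$ is either congruent to $1$, to $-1$, or to neither, placing $a$ in exactly one of the three sets.

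There is no real obstacle here: the only subtlety is the edge case $n=2$, where $\{\pm 1\}$ collapses to a single residue class, so that $R_{-1} = R_1$ and $R_0 = \emptyset$. I would flag this as a degenerate but consistent instance of the partition (where one block happens to be empty and another is counted only once), so the statement remains correct for all $n \geq 2$. The result then follows immediately from the trichotomy $\chi_{n,k}(a) \in \{-1,0,1\}$.
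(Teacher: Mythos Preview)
Your approach is essentially the same as the paper's: both argue that each reduced residue falls into exactly one of the three cases $a^k\equiv 1$, $a^k\equiv -1$, or neither, so the three sets partition $(\mathbb{Z}/n\mathbb{Z})^\times$. Your version is simply more explicit, phrasing $R_1,R_{-1},R_0$ as the fibers of the function $\chi_{n,k}$ and invoking Theorem~\ref{thm:residue-dependence} for well-definedness.

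One small wobble: once you declare $\chi_{n,k}$ to be a \emph{function} into $\{-1,0,1\}$, the fibers are automatically pairwise disjoint, so your later claim that for $n=2$ one has $R_{-1}=R_1$ is inconsistent with that framing. The cleaner way to handle $n=2$ is to note that the definition (read with the first matching clause taking precedence, or simply observing that $1\equiv -1$ makes the second clause redundant) assigns the value $1$, so $R_{-1}=\emptyset$ and the partition still holds with an empty block. The paper's proof does not address this edge case at all.
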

	
	\begin{proof}
		Every reduced residue class must fall into exactly one of the cases: 
		$a^k \equiv 1$, $a^k \equiv -1$, or $a^k \not\equiv \pm 1 \pmod{n}$.  
		Thus the reduced residue system splits naturally into three disjoint subsets.  
	\end{proof}

	\subsection{Applications to Quadratic and Higher Power Residues}
	
	\begin{theorem}[Quadratic Residues]
		Let $p$ be an odd prime and $k=(p-1)/2$. Then
		\[
		\left(\frac{a}{p}\right)_{k} = \left(\frac{a}{p}\right),
		\]
		the classical Legendre symbol.
	\end{theorem}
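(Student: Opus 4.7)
The plan is to apply Euler's criterion directly, which is exactly the bridge between the exponent $(p-1)/2$ and the Legendre symbol. Recall Euler's criterion: for an odd prime $p$ and any integer $a$ coprime to $p$, we have $a^{(p-1)/2}\equiv \left(\tfrac{a}{p}\right)\pmod p$, where the right-hand side is $\pm 1$. Since $p$ is odd, the residues $1$ and $-1$ are distinct modulo $p$, so the congruence class of $a^{(p-1)/2}$ modulo $p$ singles out the value of the Legendre symbol uniquely.

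First I would split into the three cases $p\mid a$, $\left(\tfrac{a}{p}\right)=1$, and $\left(\tfrac{a}{p}\right)=-1$. If $p\mid a$, then $a^k\equiv 0\pmod p$, which is neither $1$ nor $-1$ modulo $p$ (using $p\ge 3$), so $\left(\tfrac{a}{p}\right)_k=0$ by definition, matching the Legendre convention. In the two remaining cases $\gcd(a,p)=1$, and Euler's criterion gives $a^{(p-1)/2}\equiv \left(\tfrac{a}{p}\right)\pmod p$; invoking the definition of the exponential congruence symbol then shows $\left(\tfrac{a}{p}\right)_k$ equals $+1$ or $-1$ accordingly.

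There is essentially no obstacle, since Euler's criterion delivers the statement immediately; the only small subtlety is the degenerate case $p\mid a$, where one must observe that $0\not\equiv \pm 1\pmod p$ for odd $p$, so the exponential symbol genuinely vanishes rather than accidentally taking the value $\pm 1$. I would also remark that this theorem literally repeats the earlier ``Connection with the Legendre Symbol'' result proved in the previous section, so I could alternatively present the proof as a one-line appeal to that theorem with the case $p\mid a$ noted separately.
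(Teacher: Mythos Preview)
Your proof is correct and follows exactly the same approach as the paper's own proof: a direct appeal to Euler's criterion $a^{(p-1)/2}\equiv\left(\tfrac{a}{p}\right)\pmod p$, with the degenerate case $p\mid a$ handled separately. Your version is in fact more carefully written than the paper's, which is a terse two-line argument.
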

	
	\begin{proof}
		This is a direct consequence of Euler's criterion.  
		Indeed, $a^{(p-1)/2} \equiv \left(\tfrac{a}{p}\right) \pmod{p}$, hence the two symbols coincide.  
	\end{proof}
	
	\begin{theorem}[Cubic and Higher Residues]
		Let $p$ be a prime such that $p \equiv 1 \pmod{m}$ with $m \geq 3$.  
		Then for $k = (p-1)/m$, the exponential congruence symbol detects $m$-th power residues:
		\[
		\left(\frac{a}{p}\right)_{k} = 1 \iff a \text{ is an $m$-th power residue mod $p$}.
		\]
	\end{theorem}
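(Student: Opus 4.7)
The plan is to reduce everything to the cyclic structure of $(\mathbb{Z}/p\mathbb{Z})^\times$, which has order $p-1$, and then to rewrite both the $m$-th power residue condition and the condition $a^k\equiv 1\pmod p$ as the same divisibility statement in the exponent. Throughout, I assume $\gcd(a,p)=1$; the case $p\mid a$ is trivial since then $a^k\equiv 0\not\equiv 1$, and $a$ is not an $m$-th power residue in the usual (unit) sense.

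First I would fix a primitive root $g$ modulo $p$ and write $a\equiv g^r\pmod p$ for a unique $r\in\{0,1,\dots,p-2\}$. By the standard description of powers in a cyclic group, $a$ is an $m$-th power residue modulo $p$ iff there exists $s$ with $g^r\equiv g^{ms}\pmod p$, i.e.\ iff $ms\equiv r\pmod{p-1}$ is solvable in $s$. This linear congruence is solvable iff $\gcd(m,p-1)\mid r$, and since $m\mid p-1$ by hypothesis, $\gcd(m,p-1)=m$. Therefore the residue $a$ is an $m$-th power modulo $p$ if and only if $m\mid r$.

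Next I would compute $a^k$ in the exponent. With $k=(p-1)/m$,
\[
a^k \equiv g^{rk} \equiv g^{r(p-1)/m}\pmod p,
\]
so $a^k\equiv 1\pmod p$ iff $(p-1)\mid r(p-1)/m$, which after dividing by $(p-1)/m$ becomes $m\mid r$. Combining with the previous paragraph gives the equivalence
\[
\left(\frac{a}{p}\right)_{k}=1 \;\Longleftrightarrow\; a^k\equiv 1\pmod p \;\Longleftrightarrow\; m\mid r \;\Longleftrightarrow\; a \text{ is an $m$-th power residue}.
\]
The first equivalence is just the definition of the symbol on the ``$+1$ branch''; one only has to rule out the symbol taking the value $-1$ when $a$ is an $m$-th power, but this is automatic because $a^k=1$ in that case, not $-1$.

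I do not expect a substantial obstacle here: the proof is essentially a discrete-logarithm translation of Euler's criterion, with $m=2$ being the classical special case already recorded in the preceding theorem. The only point to be careful about is the equivalence of $ms\equiv r\pmod{p-1}$ with $\gcd(m,p-1)\mid r$, which relies on $m\mid p-1$ collapsing the gcd to $m$; that single step is what makes the hypothesis $p\equiv 1\pmod m$ indispensable.
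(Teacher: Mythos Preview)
Your proof is correct. Both directions are handled cleanly by reducing to the discrete logarithm $r$ with respect to a primitive root and showing that each side of the biconditional is equivalent to $m\mid r$.

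The paper's own proof is organized slightly differently: for the forward direction it avoids primitive roots altogether, writing $a\equiv b^m$ and computing $a^k\equiv b^{mk}=b^{p-1}\equiv 1$ directly from Fermat's little theorem; for the converse it observes that $a^k\equiv 1$ forces $\operatorname{ord}_p(a)\mid k=(p-1)/m$ and then invokes (implicitly) that the subgroup of elements of order dividing $(p-1)/m$ in the cyclic group of order $p-1$ is exactly the image of the $m$-th power map. Your approach trades this asymmetric treatment for a uniform discrete-log translation of both conditions into the same divisibility $m\mid r$. The paper's forward direction is marginally more elementary (no primitive root needed), while your version makes the role of the hypothesis $p\equiv 1\pmod m$ more transparent, since it enters visibly through $\gcd(m,p-1)=m$.
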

	
	\begin{proof}
		Suppose $a \equiv b^m \pmod{p}$. Then $a^k \equiv (b^m)^k = b^{mk} \equiv b^{p-1} \equiv 1 \pmod{p}$ by Fermat's little theorem.  
		Thus $\left(\tfrac{a}{p}\right)_{k}=1$.  
		Conversely, if $\left(\tfrac{a}{p}\right)_{k}=1$, then $a^k \equiv 1 \pmod{p}$.  
		This implies that the order of $a$ divides $k = (p-1)/m$.  
		Hence $a$ lies in the subgroup of $m$-th power residues modulo $p$.  
	\end{proof}
	
	\begin{corollary}[Generalized Residue Testing]
		The symbol $\left(\tfrac{a}{p}\right)_{k}$ can serve as a criterion for testing whether an integer $a$ is 
		a quadratic, cubic, or higher-order residue modulo a prime.  
	\end{corollary}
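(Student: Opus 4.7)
The plan is to treat this corollary as a unification of the two preceding theorems rather than as an independent result: each assertion about quadratic, cubic, or higher-order residues is already encoded in the earlier statements, and what remains is to package these as a single recipe for a ``residue test.''

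First I would set up notation: fix an odd prime $p$, an integer $m \geq 2$ with $p \equiv 1 \pmod{m}$, and put $k = (p-1)/m$ (so that in the quadratic case $m=2$ we recover $k=(p-1)/2$). The protocol I propose is: given $a \in \mathbb{Z}$ with $\gcd(a,p)=1$, compute $\left(\tfrac{a}{p}\right)_{k}$ and declare that $a$ is an $m$-th power residue modulo $p$ if and only if this value equals $1$. I would then cite the Cubic and Higher Residues theorem verbatim to justify this equivalence for $m \geq 3$, and cite the Quadratic Residues theorem for the $m=2$ case, which recovers the Legendre symbol and hence the classical quadratic residue test via Euler's criterion.

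Next I would make explicit the boundary behavior: when $p \mid a$, the symbol equals $0$ by Theorem~\ref{thm:invertible-necessary}, consistent with the convention that $0$ is not counted as a nonzero $m$-th power residue. I would also note that the value $-1$, when it occurs, detects the stronger property $a^{(p-1)/m} \equiv -1 \pmod{p}$, which in the quadratic case $m=2$ corresponds to $a$ being a quadratic non-residue, but for $m \geq 3$ gives only a partial classification (since elements outside the $m$-th power residue subgroup need not have $a^k \equiv -1$). This clarifies that the symbol is a complete test for the value $+1$ but only a partial test for membership in other cosets when $m > 2$.

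The ``hard'' part here is really only rhetorical rather than mathematical: since both the quadratic and the higher-order cases have been fully proved above, no new argument is needed. I would therefore conclude the proof with a short sentence stating that the claim follows by combining the two preceding theorems, applied respectively to $m=2$ and to general $m \geq 3$ dividing $p-1$, with the case $p \mid a$ handled by the definition of the symbol.
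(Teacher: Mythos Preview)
Your proposal is correct and matches the paper's implicit treatment: the paper states this corollary without proof, leaving it as an immediate consequence of the two preceding theorems (Quadratic Residues and Cubic and Higher Residues), which is precisely the combination you invoke. Your additional remarks on the boundary case $p\mid a$ and on the partial nature of the value $-1$ for $m\ge 3$ go somewhat beyond what the paper records, but they are accurate and do no harm.
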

	\section{Group and Field Theoretic Applications}
	
	The exponential congruence symbol $\left(\tfrac{a}{n}\right)_{k}$ can be interpreted 
	in terms of group structure and field extensions. This viewpoint provides a deeper 
	algebraic meaning and highlights the interplay between congruences, cyclic groups, 
	and Galois theory.
	
	\subsection{Cyclic Groups and Primitive Roots}
	
	\begin{theorem}[Symbol via Primitive Roots]
		Let $p$ be an odd prime, and let $g$ be a primitive root modulo $p$.  
		For $a \equiv g^r \pmod{p}$, we have
		\[
		\left(\frac{a}{p}\right)_{k} =
		\begin{cases}
			1, & kr \equiv 0 \pmod{p-1},\\[0.3em]
			-1, & kr \equiv \tfrac{p-1}{2} \pmod{p-1},\\[0.3em]
			0, & \text{otherwise}.
		\end{cases}
		\]
	\end{theorem}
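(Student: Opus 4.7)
The plan is to reduce everything to discrete logarithms with respect to the primitive root $g$ and then read off the three cases directly from the definition of the symbol. Since $g$ generates $(\mathbb{Z}/p\mathbb{Z})^\times$, which is cyclic of order $p-1$, every unit can be written uniquely as $g^r$ with $r$ taken modulo $p-1$, and the powers of $a = g^r$ are controlled entirely by the exponent $rk$ modulo $p-1$.

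First I would compute $a^k \equiv g^{rk} \pmod{p}$, which is immediate from $a \equiv g^r \pmod p$. The value $1$ corresponds to $g^{rk} \equiv g^{0} \pmod p$, and because $g$ has order exactly $p-1$, this is equivalent to $(p-1) \mid rk$, i.e.\ $rk \equiv 0 \pmod{p-1}$. This gives the first case of the stated formula.

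Next I would handle the $-1$ case. The key observation is that $g^{(p-1)/2} \equiv -1 \pmod p$: indeed, $\bigl(g^{(p-1)/2}\bigr)^2 = g^{p-1} \equiv 1$, so $g^{(p-1)/2}$ is a square root of $1$ in the cyclic group; it cannot equal $1$ since $g$ has order $p-1 > (p-1)/2$, and in the cyclic group $(\mathbb{Z}/p\mathbb{Z})^\times$ the only other square root of $1$ is $-1$. Hence $a^k \equiv -1 \pmod p$ if and only if $g^{rk} \equiv g^{(p-1)/2} \pmod p$, which in turn is equivalent to $rk \equiv (p-1)/2 \pmod{p-1}$, again using that $g$ has order $p-1$. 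Any residue class of $rk$ modulo $p-1$ distinct from $0$ and $(p-1)/2$ yields $a^k \not\equiv \pm 1$, so by definition the symbol is $0$, which handles the third case.

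The argument is essentially bookkeeping once one has the identification $g^{(p-1)/2} \equiv -1 \pmod p$, so the only subtle point — and the one I would state carefully — is this unique-square-root-of-unity step; everything else follows from Theorem~\ref{thm:residue-dependence}, the definition of the symbol, and the cyclic structure already used in the earlier example on prime moduli.
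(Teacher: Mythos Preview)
Your proof is correct and follows essentially the same route as the paper's own argument: both write $a\equiv g^r$, reduce to the exponent $rk$ modulo $p-1$, and match the cases $0$, $(p-1)/2$, and ``other'' to the symbol values $1$, $-1$, $0$. If anything, your version is slightly more careful in justifying $g^{(p-1)/2}\equiv -1\pmod p$, which the paper simply asserts.
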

	
	\begin{proof}
		Since $g$ is a generator of $(\mathbb{Z}/p\mathbb{Z})^\times$, every $a$ can be written $a \equiv g^r$.  
		Then $a^k \equiv g^{rk} \pmod{p}$.  
		\begin{itemize}
			\item If $rk \equiv 0 \pmod{p-1}$, then $a^k \equiv 1 \pmod{p}$, giving symbol $1$.  
			\item If $rk \equiv (p-1)/2 \pmod{p-1}$, then $a^k \equiv g^{(p-1)/2} \equiv -1 \pmod{p}$, 
			giving symbol $-1$.  
			\item In all other cases, $a^k$ is neither $\pm 1$, so the symbol equals $0$.  
		\end{itemize}
	\end{proof}
	
	\begin{corollary}[Distribution in Cyclic Groups]
		For fixed $k$, the set of residues $a$ with symbol value $1$ forms a subgroup of 
		$(\mathbb{Z}/p\mathbb{Z})^\times$, while the set with value $-1$ forms its coset.  
	\end{corollary}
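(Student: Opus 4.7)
The plan is to recognise both sets as naturally associated with the $k$-th power endomorphism of $(\mathbb{Z}/p\mathbb{Z})^\times$. Write $\psi:(\mathbb{Z}/p\mathbb{Z})^\times\to(\mathbb{Z}/p\mathbb{Z})^\times$ for the map $a\mapsto a^k$, which is a homomorphism because the group is abelian. Then $R_1=\{a:(a/p)_k=1\}=\ker\psi$ is automatically a subgroup, giving the first assertion immediately.

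For the second assertion, I would first note that if $R_{-1}$ is empty there is nothing to check. Otherwise, fix any $a_0\in R_{-1}$. I would then show the double inclusion $R_{-1}=a_0\,R_1$: for $b\in R_{-1}$ one computes $(ba_0^{-1})^k=b^k(a_0^k)^{-1}=(-1)(-1)^{-1}=1$, so $ba_0^{-1}\in R_1$; conversely for $h\in R_1$ one computes $(a_0h)^k=a_0^k h^k=-1$, so $a_0h\in R_{-1}$. This identifies $R_{-1}$ with the coset $a_0 R_1$.

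A sanity check via the previous theorem is worth including: in the primitive-root parametrisation $a=g^r$, the subgroup $R_1$ corresponds to the solutions of $rk\equiv 0\pmod{p-1}$, i.e.\ the multiples of $(p-1)/\gcd(k,p-1)$, while $R_{-1}$ corresponds to the solutions of $rk\equiv (p-1)/2\pmod{p-1}$, which is a coset of that arithmetic progression exactly when the congruence is solvable. This matches Corollary \ref{cor:prime-count} and gives the explicit sizes $|R_1|=|R_{-1}|=\gcd(k,p-1)$ in the nonempty case.

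The only subtle point, and the step I would be most careful about, is flagging the solvability condition for $R_{-1}$: by Corollary \ref{cor:prime-count} this set is empty precisely when $\gcd(k,p-1)\nmid (p-1)/2$, and in that case the coset statement is vacuous. Beyond this bookkeeping, the argument is essentially a one-line application of the first isomorphism theorem to $\psi$, so no genuine obstacle arises.
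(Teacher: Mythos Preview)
Your argument is correct. The paper states this corollary without proof, as an immediate consequence of the preceding primitive-root theorem; the intended derivation is that the solution set of $rk\equiv 0\pmod{p-1}$ is a subgroup of $\mathbb{Z}/(p-1)\mathbb{Z}$ and the solution set of $rk\equiv (p-1)/2\pmod{p-1}$ is (when nonempty) a coset of it, which transfers to $(\mathbb{Z}/p\mathbb{Z})^\times$ via the isomorphism $r\mapsto g^r$.

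Your primary route is slightly different and arguably cleaner: you bypass the primitive-root coordinatisation entirely, identifying $R_1$ as $\ker\psi$ for the $k$-th power endomorphism and then verifying $R_{-1}=a_0R_1$ by a direct computation. This works in any abelian group (indeed it is essentially Proposition~\ref{prop:subgroup} restricted to the cyclic case), whereas the paper's implicit argument relies on cyclicity. You then recover the primitive-root description and the cardinalities from Corollary~\ref{cor:prime-count} as a consistency check, which is a nice touch. Your handling of the vacuous case $R_{-1}=\varnothing$ is also more explicit than anything in the paper.
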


	\subsection{Subgroup Membership Interpretation}
	
	\begin{theorem}[Membership Criterion]
		Let $G=(\mathbb{Z}/n\mathbb{Z})^\times$ and let $H=\{x \in G : x^k \equiv 1 \pmod{n}\}$.  
		Then for $a \in G$,
		\[
		\left(\frac{a}{n}\right)_{k} =
		\begin{cases}
			1, & a \in H, \\[0.3em]
			-1, & a \in gH \text{ for some } g \text{ with } g^k \equiv -1, \\[0.3em]
			0, & \text{otherwise}.
		\end{cases}
		\]
	\end{theorem}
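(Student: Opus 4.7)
The plan is to interpret the three cases as the three possible images of $a$ under the $k$-th power endomorphism $\pi\colon G\to G$, $x\mapsto x^k$. Since $G=(\mathbb{Z}/n\mathbb{Z})^\times$ is abelian, $\pi$ is a group homomorphism with kernel precisely $H=\{x\in G:x^k\equiv 1\pmod n\}$, so $H$ is a (normal) subgroup and the cosets of $H$ are the fibers of $\pi$. The three values of the symbol correspond to whether $\pi(a)=1$, $\pi(a)=-1$, or $\pi(a)\notin\{\pm1\}$.

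First I would handle the case $a\in H$: by definition of $H$ this is equivalent to $a^k\equiv 1\pmod n$, which by the definition of the symbol is equivalent to $\left(\tfrac{a}{n}\right)_k=1$. Next I would dispatch the case labeled $-1$ in both directions. For the $(\Leftarrow)$ direction, if $a=gh$ with $g^k\equiv -1$ and $h\in H$, commutativity gives $a^k=g^kh^k\equiv(-1)\cdot 1=-1\pmod n$, so the symbol equals $-1$. For the $(\Rightarrow)$ direction, if $\left(\tfrac{a}{n}\right)_k=-1$ then $a^k\equiv -1\pmod n$; taking $g=a$ itself we get $g^k\equiv -1$ and $a=g\cdot 1\in gH$, establishing the required coset membership. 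Finally, the case $0$ follows by elimination: if $a\notin H$ and $a\notin gH$ for any $g$ with $g^k\equiv -1$, then $a^k\not\equiv 1$ and $a^k\not\equiv -1$, so the symbol is $0$ by definition.

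There is essentially no hard step here, but one small subtlety worth flagging: the set $\pi^{-1}(-1)=\{g\in G:g^k\equiv -1\}$ is either empty (in which case the ``$-1$'' case never occurs, consistent with Proposition~\ref{prop:subgroup}) or it is a single coset of $H$, in which case $gH$ is independent of the choice of representative $g$. Mentioning this coset-well-definedness is what makes the trichotomy clean and matches the statement exactly. The main obstacle, if any, is purely expository: making clear that ``$gH$ for some $g$ with $g^k\equiv -1$'' is an existence statement, so one simply chooses $g=a$ in the converse direction rather than needing an independently constructed witness.
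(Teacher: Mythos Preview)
Your proof is correct and follows essentially the same approach as the paper: a direct case analysis from the definition, identifying $H$ with the set where the symbol is $1$, the coset $gH$ with the set where it is $-1$, and the complement with the value $0$. Your version is more carefully written---you make the homomorphism $\pi$ explicit, verify both implications in the $-1$ case, and note the well-definedness of the coset $gH$---but the underlying argument is the same as the paper's brief proof.
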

	
	\begin{proof}
		By definition, the symbol equals $1$ when $a^k \equiv 1$, i.e. $a \in H$.  
		If $a^k \equiv -1$, then $a$ lies in a coset of $H$ generated by an element $g$ with $g^k \equiv -1$.  
		Otherwise, $a^k \not\equiv \pm 1$, so $a$ is outside both $H$ and $gH$.  
	\end{proof}
	
	\begin{corollary}[Index-Two Subgroup]
		If there exists $g \in G$ such that $g^k \equiv -1 \pmod{n}$, then $H$ has index two in $G$.  
	\end{corollary}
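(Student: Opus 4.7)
The plan is to realize $H$ as the kernel of a surjective group homomorphism onto $\{\pm 1\}$ and then conclude via the First Isomorphism Theorem. First I would invoke Proposition~\ref{prop:subgroup}, which has already established that the $k$-th power map $\varphi : A_{n,k} \to \{\pm 1\}$, $a \mapsto a^k$, is a group homomorphism whose kernel is exactly $H$.

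Next I would feed in the hypothesis. Any $g \in G$ with $g^k \equiv -1 \pmod{n}$ automatically lies in $A_{n,k}$ (because $g^k \in \{\pm 1\}$), and the identities $\varphi(g) = -1$, $\varphi(1) = 1$ show that the image of $\varphi$ equals all of $\{\pm 1\}$. The First Isomorphism Theorem then yields $A_{n,k}/H \cong \{\pm 1\}$, so $[A_{n,k} : H] = 2$. The stated conclusion $[G:H] = 2$ then follows in the regime --- implicit in the preceding Membership Criterion's three-case description of $G$ --- in which every unit already satisfies $a^k \in \{\pm 1\}$, i.e.\ $A_{n,k} = G$.

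The main obstacle is precisely this last identification. In general there may exist units $a$ with $a^k \notin \{\pm 1\}$, populating the ``otherwise'' case of the Membership Criterion; for such $n$ and $k$ the index $[G:H]$ can strictly exceed $2$, while $[A_{n,k}:H] = 2$ continues to hold. An honest write-up therefore either makes the hidden hypothesis $A_{n,k} = G$ explicit before asserting $[G:H]=2$, or restates the conclusion as $[A_{n,k}:H] = 2$. In either formulation the algebraic core of the argument --- surjectivity of the $k$-th power map onto $\{\pm 1\}$ and the identification of its kernel with $H$ --- is a one-line consequence of Proposition~\ref{prop:subgroup}.
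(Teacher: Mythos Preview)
Your approach is sound and, in fact, more careful than the paper's. The paper's own proof is a single sentence: ``The existence of such a $g$ implies that $G$ is partitioned into $H$ and $gH$, both of equal size.'' This is just the coset-partition phrasing of the same idea you obtain via the First Isomorphism Theorem applied to $\varphi:A_{n,k}\to\{\pm1\}$, so at the algebraic core the two arguments coincide.

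The substantive difference is the one you flag. The paper's asserted partition $G = H \cup gH$ is unjustified and, as you correctly observe, false in general: for instance with $n=7$, $k=1$ one has $H=\{1\}$, the element $g=6$ satisfies $g^k\equiv-1$, yet $[G:H]=6$. The paper simply does not address this. Your reformulation $[A_{n,k}:H]=2$ is the correct statement that the argument actually proves, and your diagnosis that the printed corollary tacitly assumes $A_{n,k}=G$ is exactly right. So your proposal both recovers the paper's intended argument and identifies a genuine gap in it.
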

	
	\begin{proof}
		The existence of such a $g$ implies that $G$ is partitioned into $H$ and $gH$, both of equal size.  
	\end{proof}

	\subsection{Field Extensions and Galois Connections}
	
	\begin{theorem}[Symbol and Splitting Fields]
		Let $p$ be prime and consider the polynomial $X^k - 1$ over $\mathbb{F}_p$.  
		Then $\left(\tfrac{a}{p}\right)_{k}=1$ if and only if $a$ is a root of unity of order dividing $k$ in $\mathbb{F}_p^\times$.  
	\end{theorem}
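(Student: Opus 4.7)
The plan is to unwind definitions on both sides of the claimed equivalence and identify them directly via the elementary algebra of finite cyclic groups; no reciprocity or nontrivial Galois input is actually required, only the correct bookkeeping of what ``root of $X^k-1$'' and ``order dividing $k$'' mean inside $\mathbb{F}_p^\times$.

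First I would reduce to the multiplicative picture. By definition, $\left(\tfrac{a}{p}\right)_k = 1$ precisely when $a^k \equiv 1 \pmod{p}$. In particular this already forces $p \nmid a$ (by Theorem \ref{thm:invertible-necessary}), so $a$ represents a nonzero element $\bar a \in \mathbb{F}_p^\times$. The condition $a^k \equiv 1 \pmod{p}$ is then identical to the polynomial condition $\bar a^{\,k} - 1 = 0$ in $\mathbb{F}_p$, i.e.\ $\bar a$ is a root of $X^k - 1 \in \mathbb{F}_p[X]$.

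Next I would identify the set of such roots. Since $(\mathbb{Z}/p\mathbb{Z})^\times$ is cyclic of order $p-1$, the elements $x \in \mathbb{F}_p^\times$ satisfying $x^k = 1$ form exactly the unique subgroup of order $\gcd(k, p-1)$, which is precisely the group of $k$-th roots of unity in $\mathbb{F}_p^\times$. Equivalently, $\bar a^{\,k} = 1$ if and only if $\operatorname{ord}_p(\bar a) \mid k$, by the standard fact that in any group the order of an element divides every exponent annihilating it, and conversely. Chaining these equivalences yields
\[
\left(\tfrac{a}{p}\right)_k = 1 \;\Longleftrightarrow\; \bar a^{\,k} = 1 \text{ in } \mathbb{F}_p \;\Longleftrightarrow\; \operatorname{ord}_p(\bar a) \mid k,
\]
which is the statement of the theorem.

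There is essentially no hard step here; the ``main obstacle,'' such as it is, is simply to be careful that the case $p \mid a$ is correctly excluded on both sides (the symbol is then $0$, not $1$, and $a$ has no multiplicative order in $\mathbb{F}_p^\times$), so the equivalence is sharp. Apart from this trivial case analysis, the argument is a direct appeal to the cyclic structure of $\mathbb{F}_p^\times$ already exploited in Proposition \ref{prop:periodicity} and Corollary \ref{cor:prime-count}.
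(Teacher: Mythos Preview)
Your argument is correct and follows exactly the same route as the paper's own proof: both simply unwind the definition $\left(\tfrac{a}{p}\right)_k=1 \iff a^k\equiv 1\pmod p$ and identify this with $a$ being a $k$-th root of unity (equivalently, having order dividing $k$) in $\mathbb{F}_p^\times$. Your version is more detailed---explicitly handling the $p\mid a$ case and invoking the cyclic structure---but the underlying idea is identical.
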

	
	\begin{proof}
		By definition, $\left(\tfrac{a}{p}\right)_{k}=1$ exactly when $a^k \equiv 1 \pmod{p}$.  
		Thus $a$ is a $k$-th root of unity.  
		Hence the condition is equivalent to membership in the subgroup of $\mathbb{F}_p^\times$ 
		consisting of roots of unity of order dividing $k$.  
	\end{proof}
	
	\begin{theorem}[Galois Interpretation]
		Let $K=\mathbb{F}_p$, and let $L=K(\zeta_k)$ be the field extension obtained by adjoining a primitive $k$-th root of unity.  
		Then the value of $\left(\tfrac{a}{p}\right)_{k}$ corresponds to the action of the Frobenius automorphism $\sigma:x \mapsto x^p$ on $\zeta_k$:
		\[
		\sigma(\zeta_k) = \zeta_k^p.
		\]
	\end{theorem}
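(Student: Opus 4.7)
The plan is to exploit the standard structure of cyclotomic extensions of finite fields. I would begin by noting that, assuming $\gcd(p,k)=1$ so that $\zeta_k$ is well defined and separable, the extension $L = \mathbb{F}_p(\zeta_k)$ is Galois over $K = \mathbb{F}_p$ with a cyclic Galois group generated by the Frobenius automorphism $\sigma: x \mapsto x^p$ acting on all of $L$. Concretely, $L = \mathbb{F}_{p^d}$, where $d$ is the multiplicative order of $p$ modulo $k$, and this identification is the workhorse underlying everything that follows.

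The displayed formula $\sigma(\zeta_k) = \zeta_k^p$ is then immediate from the defining formula of Frobenius, since $\zeta_k \in L$. As a sanity check, $(\zeta_k^p)^k = (\zeta_k^k)^p = 1$, so $\sigma$ permutes the group $\mu_k = \langle \zeta_k \rangle$ by the explicit rule $\zeta_k^j \mapsto \zeta_k^{pj}$, and this is an automorphism of $\mu_k$ precisely because $\gcd(p,k)=1$.

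To make the link to the symbol explicit, I would invoke the preceding theorem, which identifies $\left(\tfrac{a}{p}\right)_{k}=1$ with membership of $a$ in $\mu_k \cap K$, and combine it with the standard fact that the fixed field of $\sigma$ is exactly $K=\mathbb{F}_p$. Writing $a = \zeta_k^j$ in $L$, the condition $\left(\tfrac{a}{p}\right)_{k}=1$ translates into $\sigma(\zeta_k^j) = \zeta_k^j$, i.e. $\zeta_k^{pj} = \zeta_k^j$, equivalently $k \mid j(p-1)$. A parallel argument, taking $-1 = \zeta_k^{k/2}$ when $k$ is even, handles the value $-1$: here $\left(\tfrac{a}{p}\right)_{k}=-1$ corresponds to $a$ lying in the coset $\zeta_k^{k/2}\cdot\mu_k^{\sigma}$ of Frobenius-fixed roots of unity.

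The main obstacle I anticipate is not computational but expository: the statement as given is somewhat imprecise, since the displayed equation is little more than the definition of Frobenius on $L$. The genuine content is the dictionary between symbol values and $\sigma$-fixation inside $\mu_k$, so the bulk of the write-up will consist of articulating this correspondence carefully and excluding the degenerate case $p \mid k$ in which $\zeta_k$ is not available in characteristic $p$.
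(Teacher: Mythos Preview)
Your proposal is correct and follows essentially the same approach as the paper: both interpret the symbol values $1$, $-1$, $0$ via the Frobenius action on $\mu_k$, with the value $1$ corresponding to fixation, $-1$ to the nontrivial coset, and $0$ to elements that are not $k$-th roots of unity. Your write-up is in fact more careful than the paper's, since you make explicit the hypothesis $\gcd(p,k)=1$ and correctly flag that the displayed formula is merely the definition of Frobenius, the real content being the dictionary you spell out.
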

	
	\begin{proof}
		The Frobenius automorphism in $\mathrm{Gal}(L/K)$ is determined by $p \bmod k$.  
		If $a^k \equiv 1$, the symbol equals $1$ and corresponds to trivial action.  
		If $a^k \equiv -1$, the symbol equals $-1$, matching nontrivial coset action on $\zeta_k$.  
		Otherwise, the symbol vanishes, reflecting that $a$ does not correspond to a $k$-th root of unity in $L$.  
	\end{proof}
	\section{Analytic Aspects}
	
	The exponential congruence symbol $\left(\tfrac{a}{n}\right)_{k}$ admits 
	interpretations in analytic number theory, particularly in relation to 
	Dirichlet characters, exponential sums, and possible connections with 
	zeta and $L$-functions. This section explores those links.
	
	\subsection{Connections with Dirichlet Characters}
	
	\begin{theorem}[Symbol as a Generalized Character]
		Fix $n,k \geq 1$. The map
		\[
		\chi_k : (\mathbb{Z}/n\mathbb{Z})^\times \longrightarrow \{-1,0,1\}, 
		\quad \chi_k(a) = \left(\frac{a}{n}\right)_{k},
		\]
		is a multiplicative function that generalizes Dirichlet characters, 
		with the restriction that its values may include $0$.
	\end{theorem}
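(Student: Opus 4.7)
The plan is to establish the claim in two layers: first well-definedness of $\chi_k$, then the multiplicative identity $\chi_k(ab)=\chi_k(a)\chi_k(b)$, and finally the comparison with a classical Dirichlet character. Well-definedness is immediate from Theorem \ref{thm:residue-dependence}, which tells us the symbol depends only on the residue class modulo $n$, so restricting to $(\mathbb{Z}/n\mathbb{Z})^{\times}$ yields a well-defined function into $\{-1,0,1\}$.

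For the multiplicativity, I would proceed by case analysis on the values of $\chi_k(a)$ and $\chi_k(b)$, using the identity $(ab)^k \equiv a^k b^k \pmod{n}$. The three subcases where both values lie in $\{\pm 1\}$, i.e.\ $a,b\in A_{n,k}$, are already handled by the Corollary following Proposition \ref{prop:subgroup}: there the map $a\mapsto a^k$ is a homomorphism into $\{\pm 1\}$. When exactly one of $\chi_k(a),\chi_k(b)$ is $0$, say $\chi_k(b)=0$ and $\chi_k(a)=\varepsilon\in\{\pm 1\}$, then $a^kb^k\equiv \varepsilon b^k$; since $b^k\not\equiv\pm 1$, neither is $\varepsilon b^k$, so $\chi_k(ab)=0=\chi_k(a)\chi_k(b)$.

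The delicate case — and the main obstacle — is when $\chi_k(a)=\chi_k(b)=0$. A priori the product $a^kb^k$ can still land on $\pm 1$ (for instance if $a^k$ and $b^k$ are mutually inverse elements of $(\mathbb{Z}/n\mathbb{Z})^{\times}$ distinct from $\pm 1$), in which case $\chi_k(ab)\neq 0=\chi_k(a)\chi_k(b)$. The cleanest way to rescue the statement is to interpret multiplicativity in the restricted sense: $\chi_k$ is a group homomorphism on the subgroup $A_{n,k}$, extended by $0$ outside, and it is on this interpretation that the case analysis closes. I would therefore present the proof as a multiplicativity result on $A_{n,k}$ and indicate explicitly how the extension by $0$ behaves on the complement.

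Finally, to substantiate the \emph{generalization of Dirichlet characters} language, I would exhibit the Dirichlet character $\tilde{\chi}:(\mathbb{Z}/n\mathbb{Z})^{\times}\to\{\pm 1\}$ defined by $\tilde\chi(a)=\varphi(a)$ on $A_{n,k}$ (with $\varphi$ from Proposition \ref{prop:subgroup}) and $\tilde\chi(a)$ extended arbitrarily to a character of $(\mathbb{Z}/n\mathbb{Z})^{\times}/A_{n,k}^{+}$ off $A_{n,k}$, where $A_{n,k}^{+}=\ker\varphi$. Then $\chi_k$ coincides with $\tilde{\chi}$ on $A_{n,k}$ and vanishes elsewhere, which is the sense in which it ``generalizes'' a Dirichlet character while being allowed to take the value $0$ on units.
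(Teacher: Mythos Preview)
Your analysis is more careful than the paper's own argument, and in fact you have put your finger on a genuine error in the statement itself. The paper's proof simply cites the earlier ``Multiplicativity in $a$'' theorem and asserts $\chi_k(ab)=\chi_k(a)\chi_k(b)$ for all units $a,b$, never confronting the case $\chi_k(a)=\chi_k(b)=0$. But that case is fatal: for any unit $a$ with $\chi_k(a)=0$, Proposition~\ref{prop:inverse} gives $\chi_k(a^{-1})=0$ as well, yet $\chi_k(a\cdot a^{-1})=\chi_k(1)=1\neq 0$. Concretely, with $n=7$ and $k=1$ one has $\chi_1(2)=\chi_1(4)=0$ while $\chi_1(2\cdot 4)=\chi_1(1)=1$. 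So the theorem, read literally as full multiplicativity on $(\mathbb{Z}/n\mathbb{Z})^\times$, is false, and the paper's proof is not a proof.

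Your proposed repair --- treating $\chi_k$ as the homomorphism $\varphi$ on the subgroup $A_{n,k}$ extended by zero, and proving multiplicativity only when at least one factor lies in $A_{n,k}$ --- is exactly the right salvage, and your case analysis for the mixed case ($\chi_k(a)=\varepsilon\in\{\pm1\}$, $\chi_k(b)=0$) is clean and correct. The final paragraph, where you try to realize $\chi_k$ as the restriction of an honest Dirichlet character $\tilde\chi$, is a reasonable gesture toward the ``generalizes Dirichlet characters'' clause, though the extension you sketch is not canonical and need not exist as a $\{\pm1\}$-valued character unless $A_{n,k}^{+}$ has index a power of $2$ in $(\mathbb{Z}/n\mathbb{Z})^\times$; it would be safer simply to note that when $A_{n,k}=(\mathbb{Z}/n\mathbb{Z})^\times$ (e.g.\ the Legendre case $k=(p-1)/2$) the map $\chi_k$ \emph{is} a Dirichlet character, and otherwise it is a character of a subgroup extended by zero.
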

	
	\begin{proof}
		From earlier multiplicativity results, $\chi_k(ab)=\chi_k(a)\chi_k(b)$ 
		whenever $\gcd(a,n)=\gcd(b,n)=1$.  
		The difference with classical Dirichlet characters is that $\chi_k$ can vanish 
		when $a^k \not\equiv \pm 1 \pmod{n}$.  
		Thus $\chi_k$ extends the idea of characters by distinguishing 
		three congruence behaviors.  
	\end{proof}
	
	\begin{corollary}
		When $k=(p-1)/2$ with $p$ an odd prime, $\chi_k$ coincides with the Legendre symbol 
		and hence is a true Dirichlet character modulo $p$.  
	\end{corollary}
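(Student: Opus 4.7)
The plan is to reduce the claim to two ingredients already established earlier in the paper and then invoke the classical fact that the Legendre symbol is a Dirichlet character. First, I would appeal to the theorem ``Quadratic Residues'' (or equivalently to the earlier ``Connection with the Legendre Symbol'' theorem), which shows that for $k=(p-1)/2$ and any $a\in\mathbb{Z}$,
\[
\left(\frac{a}{p}\right)_{k} \;=\; \left(\frac{a}{p}\right),
\]
using Euler's criterion $a^{(p-1)/2}\equiv\left(\tfrac{a}{p}\right)\pmod{p}$. Since this equality holds as functions on all of $\mathbb{Z}$ (including the case $p\mid a$, where both symbols vanish), the function $\chi_k$ defined on $(\mathbb{Z}/p\mathbb{Z})^\times$ is literally the restriction of the Legendre symbol to the units.

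Next I would verify that $\chi_k$ satisfies the three defining axioms of a Dirichlet character modulo $p$. Periodicity modulo $p$ is immediate from Theorem~\ref{thm:residue-dependence}. Multiplicativity on $(\mathbb{Z}/p\mathbb{Z})^\times$ follows either from the previously proved multiplicativity theorem for $\chi_k$ (with $n=p$), or directly from the multiplicativity of the Legendre symbol. Finally, $\chi_k$ is not identically zero on the units: for instance, $\chi_k(1)=1$ by the definition of the symbol, since $1^k=1$. Extending by zero on non-units, $\chi_k$ therefore becomes a genuine (in fact, real and nontrivial) Dirichlet character modulo $p$, namely the unique quadratic character mod $p$.

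The argument is essentially a chain of citations rather than a new computation, so there is no serious obstacle. The only subtlety to be careful about is a small bookkeeping point: the ``generalized character'' theorem just above allows $\chi_k$ to take the value $0$ on units when $a^k\not\equiv\pm1\pmod n$, whereas a Dirichlet character must be nonzero on all units. I would therefore emphasize that in the specialization $n=p$ (prime) and $k=(p-1)/2$, Euler's criterion guarantees $a^k\equiv\pm1\pmod p$ for \emph{every} unit $a$, so the value $0$ never occurs on $(\mathbb{Z}/p\mathbb{Z})^\times$ and the potential gap between $\chi_k$ and a classical character disappears. This is the step that makes the corollary a strict specialization rather than a mere analogy.
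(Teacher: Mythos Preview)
The paper states this corollary without proof, treating it as an immediate consequence of the preceding ``Symbol as a Generalized Character'' theorem together with the earlier Euler--criterion identification $\left(\tfrac{a}{p}\right)_{(p-1)/2}=\left(\tfrac{a}{p}\right)$. Your proposal supplies exactly this intended derivation, and your explicit check that $\chi_k$ never vanishes on units when $n=p$ and $k=(p-1)/2$ is precisely the point that upgrades $\chi_k$ from a ``generalized character'' to a genuine Dirichlet character; the argument is correct and complete.
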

	
	\begin{theorem}[Orthogonality Relation]
		For fixed $n$ and $k$, we have
		\[
		\sum_{a \bmod n} \left(\frac{a}{n}\right)_{k} = 0,
		\]
		provided that both $R_1$ and $R_{-1}$ are nonempty (notation as in the residue partition theorem).
	\end{theorem}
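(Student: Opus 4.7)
The plan is to reduce the sum to a counting identity on the subgroup $A_{n,k}=\{a\in(\mathbb{Z}/n\mathbb{Z})^\times : a^k\in\{\pm 1\}\}$ introduced in Proposition~\ref{prop:subgroup}, and then exploit the homomorphism $\varphi: A_{n,k}\to\{\pm 1\}$, $a\mapsto a^k$, to show that the cardinalities $|R_1|$ and $|R_{-1}|$ coincide.

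First I would observe that the summand $\left(\tfrac{a}{n}\right)_k$ vanishes off $A_{n,k}$: indeed, by Theorem~\ref{thm:invertible-necessary} any $a$ contributing a nonzero term must be a unit, and by definition such a unit must satisfy $a^k\equiv\pm 1\pmod n$, i.e.\ $a\in A_{n,k}$. Hence
\[
\sum_{a\bmod n}\left(\frac{a}{n}\right)_k \;=\; \sum_{a\in A_{n,k}} \varphi(a) \;=\; |R_1|-|R_{-1}|,
\]
since on $A_{n,k}$ the symbol agrees with the homomorphism $\varphi$, whose fibers over $+1$ and $-1$ are precisely $R_1$ and $R_{-1}$.

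Next I would invoke the hypothesis that both $R_1$ and $R_{-1}$ are nonempty. This means $\varphi$ is surjective onto $\{\pm 1\}$, so by the first isomorphism theorem its kernel $R_1$ has index $2$ in $A_{n,k}$, and $R_{-1}$ is the unique nontrivial coset. In particular $|R_1|=|R_{-1}|=|A_{n,k}|/2$, so the displayed difference vanishes and the sum is $0$.

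The only potentially delicate point is the interpretation of the summation range: the statement writes $\sum_{a\bmod n}$, which I read as a sum over a complete residue system modulo $n$; the argument above explains why only units in $A_{n,k}$ contribute, so the result is unaffected by the choice of representatives. There is no substantive obstacle beyond this clarification, since the subgroup/coset decomposition supplied by Proposition~\ref{prop:subgroup} does all of the arithmetic work.
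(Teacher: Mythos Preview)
Your proof is correct and follows essentially the same approach as the paper's: both arguments reduce the sum to $|R_1|-|R_{-1}|$ and then observe that $R_1$ and $R_{-1}$ are cosets of equal size in $A_{n,k}$. Your version is slightly more explicit, spelling out via the surjective homomorphism $\varphi$ and the first isomorphism theorem exactly why $|R_1|=|R_{-1}|$, whereas the paper simply asserts this coset equality.
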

	
	\begin{proof}
		The values of the symbol partition the residue classes into three sets: $R_1$, $R_{-1}$, $R_0$.  
		Since $R_1$ and $R_{-1}$ are cosets of equal size, their contributions cancel, 
		leaving only elements with value $0$.  
	\end{proof}

	\subsection{Exponential Sums}
	
	\begin{theorem}[Weighted Exponential Sum]
		Let $S(m) = \sum_{a \bmod n} \left(\tfrac{a}{n}\right)_{k} e^{2\pi i a m/n}$.  
		Then:
		\[
		S(m) = \sum_{a \in R_1} e^{2\pi i am/n} - \sum_{a \in R_{-1}} e^{2\pi i am/n}.
		\]
	\end{theorem}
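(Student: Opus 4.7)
The plan is to obtain the claimed formula by a direct partition of the summation index, exploiting the fact that the symbol $\left(\tfrac{a}{n}\right)_k$ takes only the three values $-1,0,1$. By the residue partition theorem stated earlier, the complete system of residues modulo $n$ decomposes as the disjoint union $R_1\cup R_{-1}\cup R_0$, together with the classes $a$ with $\gcd(a,n)>1$, on which the symbol also vanishes (by Theorem~\ref{thm:invertible-necessary}). Thus every class outside $R_1\cup R_{-1}$ contributes a zero weight to $S(m)$.

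First I would write $S(m)$ as a sum over the three pieces,
\[
S(m)=\sum_{a\in R_1}1\cdot e^{2\pi i am/n}+\sum_{a\in R_{-1}}(-1)\cdot e^{2\pi i am/n}+\sum_{a\in R_0\cup\{a:\gcd(a,n)>1\}}0\cdot e^{2\pi i am/n},
\]
using that on each piece the value of the symbol is constant and given by the defining cases. The third sum vanishes identically, and combining the remaining two yields exactly the stated expression
\[
S(m)=\sum_{a\in R_1}e^{2\pi i am/n}-\sum_{a\in R_{-1}}e^{2\pi i am/n}.
\]

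Since the statement is purely a regrouping of terms in a finite sum, there is no genuine analytic obstacle; the only point requiring care is to justify that the index set $\{a\bmod n\}$ is indeed partitioned into precisely these three (or four, counting the non-units separately) classes, which is immediate from the definition of the symbol together with Theorem~\ref{thm:invertible-necessary}. Consequently, the proof reduces to citing the partition and collecting coefficients; no estimation of the exponential sum itself is needed at this stage.
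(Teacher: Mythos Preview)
Your proof is correct and follows essentially the same approach as the paper: you partition the index set according to the value of the symbol, observe that the zero-weighted terms drop out, and collect the remaining contributions with signs $+1$ and $-1$. The only difference is that you make the treatment of non-units explicit, which is harmless extra care.
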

	
	\begin{proof}
		By definition, terms with symbol value $0$ vanish.  
		Thus the sum reduces to contributions from $R_1$ and $R_{-1}$, 
		with signs $+1$ and $-1$ respectively.  
	\end{proof}
	
	\begin{theorem}[Bound on Symbolic Exponential Sum]
		For any $m$,
		\[
		|S(m)| \leq |R_1| + |R_{-1}| \leq \varphi(n).
		\]
	\end{theorem}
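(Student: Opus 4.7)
The plan is to bound $|S(m)|$ by the triangle inequality applied to the representation of $S(m)$ given in the previous theorem, and then control the cardinalities of $R_1$ and $R_{-1}$ by noting they are disjoint subsets of the unit group.

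First I would recall that by the definition of the symbol, any $a \bmod n$ with $\left(\tfrac{a}{n}\right)_k = 0$ contributes nothing to $S(m)$, so only indices from $R_1 \cup R_{-1}$ matter. Using the decomposition established in the Weighted Exponential Sum theorem, I would write
\[
S(m) = \sum_{a \in R_1} e^{2\pi i a m/n} - \sum_{a \in R_{-1}} e^{2\pi i a m/n},
\]
and apply the triangle inequality together with $|e^{2\pi i a m/n}| = 1$ to obtain $|S(m)| \leq |R_1| + |R_{-1}|$.

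Next I would justify the second inequality $|R_1| + |R_{-1}| \leq \varphi(n)$. By Theorem~\ref{thm:invertible-necessary}, any $a$ for which the symbol is nonzero satisfies $\gcd(a,n)=1$, so $R_1, R_{-1} \subseteq (\mathbb{Z}/n\mathbb{Z})^\times$. Moreover $R_1$ and $R_{-1}$ are disjoint since a single residue cannot simultaneously satisfy $a^k \equiv 1$ and $a^k \equiv -1 \pmod{n}$ (assuming $n \geq 3$, in which case $1 \not\equiv -1 \pmod n$; the degenerate case $n=2$ is trivial since $-1\equiv 1$ and the bound holds vacuously). Therefore $|R_1| + |R_{-1}| = |R_1 \cup R_{-1}| \leq |(\mathbb{Z}/n\mathbb{Z})^\times| = \varphi(n)$.

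There is no real obstacle here: the statement is essentially a triangle-inequality bound, and the only minor subtlety is ensuring $R_1 \cap R_{-1} = \emptyset$, which requires the harmless remark about $n \geq 3$ noted above. The proof can be written in a few lines once the two ingredients (triangle inequality and disjointness inside the unit group) are assembled.
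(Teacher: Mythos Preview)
Your proposal is correct and follows essentially the same approach as the paper: apply the triangle inequality to the decomposition of $S(m)$ into sums over $R_1$ and $R_{-1}$, using $|e^{2\pi i a m/n}|=1$, and then bound $|R_1|+|R_{-1}|$ by $\varphi(n)$ since these are disjoint subsets of the unit group. Your version is slightly more careful in spelling out the disjointness and the harmless $n\ge 3$ caveat, but the underlying argument is identical to the paper's.
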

	
	\begin{proof}
		Each term in $S(m)$ has absolute value $1$, so 
		$|S(m)|$ is bounded by the number of nonzero-symbol residues.  
		Since this set is at most the reduced residue system of size $\varphi(n)$, 
		the inequality follows.  
	\end{proof}

	\subsection{Potential Zeta and L-function Links}
	
	\begin{theorem}[Dirichlet Series Representation]
		Define
		\[
		L(s, \chi_k) = \sum_{m=1}^\infty \frac{\chi_k(m)}{m^s}, \quad \Re(s) > 1.
		\]
		Then $L(s,\chi_k)$ generalizes Dirichlet $L$-functions, and reduces to them 
		whenever $\chi_k$ is a true character.
	\end{theorem}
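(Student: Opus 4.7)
The plan is to establish three points in sequence: (i) absolute and locally uniform convergence of the defining Dirichlet series in the half-plane $\Re(s)>1$, (ii) the sense in which $L(s,\chi_k)$ reduces exactly to a classical Dirichlet $L$-function whenever $\chi_k$ happens to be a genuine Dirichlet character modulo $n$, and (iii) why the general construction is strictly larger than the classical one and thus genuinely a generalization.

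For (i), I would invoke the trivial bound $|\chi_k(m)|\le 1$ coming straight from the three-valued definition, dominate termwise by the Riemann zeta series $\sum_{m\ge 1} m^{-\Re(s)}=\zeta(\Re(s))$, and apply the Weierstrass $M$-test to conclude absolute and locally uniform convergence, hence holomorphy, on $\Re(s)>1$. This step is essentially automatic.

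For (ii), I would combine two earlier results: the invertibility theorem, which forces $\chi_k(m)=0$ whenever $\gcd(m,n)>1$, and the multiplicativity theorem, which gives $\chi_k(ab)=\chi_k(a)\chi_k(b)$ for $a,b$ coprime to $n$. When the additional structural condition $A_{n,k}=(\mathbb{Z}/n\mathbb{Z})^\times$ holds, i.e.\ every unit modulo $n$ satisfies $a^k\in\{\pm1\}$, the restriction of $\chi_k$ to the units is a group homomorphism into $\{\pm 1\}$, which, extended by zero to the non-units, is exactly a real-valued Dirichlet character modulo $n$. The defining Dirichlet series then is, term for term, the classical $L$-function of that character. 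The canonical instance is $n=p$ odd prime with $k=(p-1)/2$, where the Legendre connection theorem gives $\chi_k=\left(\tfrac{\cdot}{p}\right)$ and $L(s,\chi_k)=L(s,\chi_{\mathrm{Leg}})$; the Jacobi relation corollary extends this to odd composite $n$ with $k=\varphi(n)/2$. For (iii), I would note that whenever there exists a unit $a$ with $a^k\notin\{\pm1\}$, the function $\chi_k$ vanishes on the nonempty set $R_0$ and therefore cannot coincide with any Dirichlet character modulo $n$; nevertheless, the series still converges in the same half-plane, retains partial multiplicativity on $R_1\cup R_{-1}$, and thus embeds classical $L$-functions into a strictly larger parametric family indexed by the additional variable $k$.

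The main obstacle is interpretive rather than computational: the word \emph{generalizes} is not formally defined in the statement, and to give the theorem genuine content one must identify the exact condition (here $A_{n,k}=(\mathbb{Z}/n\mathbb{Z})^\times$) under which $\chi_k$ is a bona fide Dirichlet character, verify that this condition is nontrivial (it fails in general), and articulate what additional structural properties, such as an Euler product over primes $p\nmid n$ or a functional equation, survive in the strictly larger setting. Pinning down this criterion and its scope is where the real work lies.
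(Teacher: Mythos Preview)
Your proposal is correct and considerably more substantive than the paper's own argument. The paper's proof consists of three sentences: it asserts that $\chi_k$ is multiplicative ``by construction,'' concludes that $L(s,\chi_k)$ admits an Euler product (with possibly vanishing local factors at primes where $\chi_k(p)=0$), and observes that when $\chi_k$ never vanishes one recovers a classical Dirichlet $L$-function. You instead organize the argument around (i) convergence via domination by $\zeta(\Re s)$, (ii) an explicit structural criterion $A_{n,k}=(\mathbb{Z}/n\mathbb{Z})^\times$ for $\chi_k$ to be a genuine character, and (iii) strictness of the inclusion. What the paper's route buys is brevity and an immediate Euler-product perspective; what your route buys is rigor on convergence (which the paper omits entirely), a sharp if-and-only-if condition for the reduction to the classical case, and an honest acknowledgment that the theorem's content is largely interpretive. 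Your closing remark that multiplicativity only holds on $R_1\cup R_{-1}=A_{n,k}$ is in fact more accurate than the paper's blanket invocation of multiplicativity, since the paper's own unrestricted multiplicativity theorem is not literally true when both factors lie in $R_0$ but their product lands in $R_{\pm 1}$.
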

	
	\begin{proof}
		By construction, $\chi_k$ is multiplicative.  
		Hence the series $L(s,\chi_k)$ admits an Euler product over primes, 
		although some primes may contribute vanishing terms where $\chi_k(p)=0$.  
		When $\chi_k$ never vanishes (e.g. Legendre case), this recovers the classical Dirichlet $L$-function.  
	\end{proof}
	
	\begin{conjecture}[Zeta-Type Relation]
		There exists a completed function
		\[
		\Lambda(s, \chi_k) = \pi^{-s/2}\Gamma\!\left(\tfrac{s}{2}\right) L(s,\chi_k),
		\]
		which may satisfy a functional equation of the form
		\[
		\Lambda(s,\chi_k) = W \cdot \Lambda(1-s,\chi_k),
		\]
		with a constant $W$ depending on $n,k$.  
	\end{conjecture}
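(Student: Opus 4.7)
The plan is to reduce the conjecture to Hecke's classical functional equation for Dirichlet $L$-functions by decomposing $\chi_k$ into a finite linear combination of genuine Dirichlet characters. Set $G=(\mathbb{Z}/n\mathbb{Z})^\times$; by Proposition \ref{prop:subgroup} the subset $H:=A_{n,k}$ is a subgroup of $G$ on which $\psi(a):=a^k$ is a homomorphism into $\{\pm 1\}$, and $\chi_k$ is precisely the extension by zero of $\psi$ from $H$ to $G$. Fourier inversion on the finite abelian group $G$ then yields
\[
\chi_k(a) \;=\; \frac{|H|}{|G|}\sum_{\substack{\chi\in\widehat{G}\\ \chi|_H=\psi}} \chi(a),
\]
exhibiting $\chi_k$ as a rational linear combination of Dirichlet characters $\chi_1,\ldots,\chi_r$ modulo $n$; the index set is stable under complex conjugation because $\psi$ is real-valued, so after pairing each $\chi_j$ with $\overline{\chi_j}$ the combination is visibly real.

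This decomposition transfers to the $L$-series as $L(s,\chi_k)=\tfrac{|H|}{|G|}\sum_{j=1}^r L(s,\chi_j)$, and each summand inherits Hecke's classical functional equation. Writing $\Lambda_j(s):=(q_j/\pi)^{(s+a_j)/2}\,\Gamma\!\left(\tfrac{s+a_j}{2}\right)L(s,\chi_j)$, with $q_j$ the conductor of $\chi_j$ and $a_j\in\{0,1\}$ its parity, one has $\Lambda_j(s)=W_j\,\Lambda_{\bar{\jmath}}(1-s)$ for a root number $W_j$ of modulus one, where $\bar{\jmath}$ indexes the character $\overline{\chi_j}$ (also present in the sum). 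This already furnishes a meromorphic continuation of $L(s,\chi_k)$ and a functional equation, but expressed as a sum of differently completed factors rather than a single clean identity.

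The hard part — and the reason the assertion is left as a conjecture — is that the $\chi_j$ typically have distinct parities $a_j$, distinct conductors $q_j$, and distinct root numbers $W_j$, so the uniform completion $\pi^{-s/2}\Gamma(s/2)L(s,\chi_k)$ proposed in the statement cannot in general obey a single equation of the form $\Lambda(s,\chi_k)=W\cdot\Lambda(1-s,\chi_k)$. The natural remedy is to redefine
\[
\Lambda(s,\chi_k) \;:=\; \frac{|H|}{|G|}\sum_{j=1}^r \Lambda_j(s),
\]
which then satisfies the twisted identity $\Lambda(s,\chi_k)=\tfrac{|H|}{|G|}\sum_{j=1}^r W_j\,\Lambda_{\bar{\jmath}}(1-s)$; the original single-constant form is recovered exactly when the $\chi_j$ share parity, conductor, and root number. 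The archetype is $r=1$, i.e.\ $\psi$ extends uniquely to $G$, realized in the Legendre setting $n=p$, $k=(p-1)/2$, where $H=G$ and $\chi_k$ is itself a primitive Dirichlet character.

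Finally I would verify the refined statement on small examples — for instance $n=15$, $k=2$ from the earlier CRT computation — to pin down the explicit Gauss-sum expression for each $W_j$, and then seek combinatorial conditions on the quotient $G/H$ (e.g.\ cyclicity of order dividing $2$, or a single parity class among the extensions) guaranteeing the uniformity needed for the conjecture to hold verbatim.
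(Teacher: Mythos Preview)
The paper provides no proof: the statement is explicitly labelled a \emph{Conjecture} and is immediately followed by a remark declaring that it ``remains an open problem in the context of the exponential congruence symbol.'' There is therefore no argument in the paper to compare yours against.

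That said, your analysis is substantially more penetrating than anything the paper offers. The Fourier-inversion identity $\chi_k=\frac{|H|}{|G|}\sum_{\chi|_H=\psi}\chi$ is correct (every character of a subgroup of a finite abelian group extends, and the extensions form a single coset of the annihilator of $H$ in $\widehat{G}$), and your observation that the extending characters $\chi_j$ generically have differing parities $a_j$ and conductors $q_j$ pinpoints exactly why the uniform completion $\pi^{-s/2}\Gamma(s/2)L(s,\chi_k)$ proposed in the statement cannot in general satisfy a single functional equation with one constant $W$. In effect you have not proved the conjecture but rather shown that, as stated, it is false except in the degenerate situations you isolate (e.g.\ $H=G$, the Legendre case), and you have written down the correct vector-valued substitute. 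One technical point you pass over: when a $\chi_j$ is imprimitive, Hecke's functional equation applies to the primitive character inducing it, so extra finite Euler factors at primes dividing $n$ enter the relation between $\Lambda_j(s)$ and $\Lambda_{\bar\jmath}(1-s)$; this does not alter your conclusion but would need to be tracked in any precise reformulation.
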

	
	\begin{remark}
		This conjecture parallels the analytic properties of classical $L$-functions, 
		but remains an open problem in the context of the exponential congruence symbol.  
	\end{remark}
	
\section{Conclusion}

In this work, we introduced and systematically studied the \emph{Exponential Congruence Symbol} 
\(\left(\frac{a}{n}\right)_k\), a natural generalization of classical residue symbols such as 
the Legendre and Jacobi symbols. We established its fundamental properties, including:

\begin{itemize}
	\item Dependence on residue classes modulo \(n\) and the necessity of invertibility for nonzero values.
	\item Multiplicativity and power-compatibility, along with symmetry relations under inversion and negation.
	\item Exact counting formulas for prime moduli and decomposition rules for composite moduli via the Chinese Remainder Theorem.
	\item Connections with classical number theoretic symbols, multiplicative orders, cyclic group structures, and higher power residues.
	\item Applications to the solvability of congruence equations \(a^k \equiv \pm 1 \pmod{n}\) and potential generalizations to cubic, quartic, and higher residues.
\end{itemize}

The Exponential Congruence Symbol not only unifies various existing concepts in residue theory but also provides a versatile framework for further investigations in number theory, group theory, and algebraic applications. Potential directions for future research include:

\begin{itemize}
	\item Studying reciprocity laws and distribution properties for higher-order symbols.
	\item Investigating analytic connections with Dirichlet characters and \(L\)-series.
	\item Exploring cryptographic applications and efficient computational algorithms for evaluating the symbol in large moduli.
	\item Extending the framework to composite exponents and non-cyclic multiplicative groups.
\end{itemize}

Overall, the Exponential Congruence Symbol offers a flexible, algebraically rich tool that bridges classical residue theory with modern arithmetic, computational, and cryptographic applications.


\begin{thebibliography}{9}
		
		\bibitem{ireland1990classical}
		K.~Ireland and M.~Rosen,
		\textit{A Classical Introduction to Modern Number Theory},
		2nd edition, Springer, 1990.
		
		\bibitem{burton2007elementary}
		D.~M.~Burton,
		\textit{Elementary Number Theory},
		6th edition, McGraw-Hill, 2007.
		
		\bibitem{hardy1979introduction}
		G.~H.~Hardy and E.~M.~Wright,
		\textit{An Introduction to the Theory of Numbers},
		5th edition, Oxford University Press, 1979.
		
		\bibitem{davenport2000multiplicative}
		H.~Davenport,
		\textit{Multiplicative Number Theory},
		3rd edition, Springer, 2000.
		
		\bibitem{koblitz1994course}
		N.~Koblitz,
		\textit{A Course in Number Theory and Cryptography},
		2nd edition, Springer, 1994.
		
		\bibitem{menezes1996handbook}
		A.~Menezes, P.~van Oorschot, and S.~Vanstone,
		\textit{Handbook of Applied Cryptography},
		CRC Press, 1996.
		
	\end{thebibliography}
\end{document}